\newtheorem{theorem}{Theorem}
\newtheorem{lemma}{Lemma}
\let\scr\mathscr
\def \Liminf{\mathop{\underline{\lim}}\limits}
\def\AA{\mathbb{A}}
\def\Pb{\mathbf{P}}
\def\Ex{\mathbf{E}}
\def\NN{\mathbb{N}}
\def\UU{\mathbb{U}}
\def\QQ{\mathbb{Q}}
\def\1{\mbox{1\hspace{-.25em}I}}
\begin{document}
\title{Hidden Markov Model Where Higher Noise Makes Smaller Errors}

\author[]{Yu.A. Kutoyants}
\affil[]{\small Le Mans University,  Le Mans,  France,\\
Tomsk State University, Tomsk, Russia}

\date{}

\maketitle
\begin{abstract}
We consider the problem of parameter estimation in a partially observed
linear Gaussian system  with small noises in the state and
observation equations. We describe asymptotic properties of the MLE and
Bayes estimators in the setting with state and observation noises of
possibly unequal intensities. It is shown that both estimators are
consistent, asymptotically normal with convergent moments and asymptotically
efficient. This model has an unusual feature: larger noise in the state equation
yields smaller estimation error. The proofs are based on asymptotic analysis of
the Kalman-Bucy filter and the associated Riccati equation in particular.
\end{abstract}
\noindent MSC 2000 Classification: 62M02,  62G10, 62G20.

\bigskip
\noindent {\sl Key words}: \textsl{Partially observed linear system, parameter
  estimation,  small noise asymptotic, asymptotic properties.}

\section{Introduction}

We consider partially observed stochastic linear system
\begin{align}
\label{01}
{\rm d}X_t&=f\left( \vartheta, t\right)Y_t{\rm
   d}t+\varepsilon \sigma\left(t\right){\rm d}W_t ,\qquad X_0=0, \qquad 0\leq
t\leq T ,\\
{\rm d}Y_t&=a\left(\vartheta ,t\right)Y_t{\rm d}t+\psi_\varepsilon
b\left(\vartheta ,t\right) {\rm d}V_t,\quad Y_0=y_0,\quad 0\leq
t\leq T ,
\label{02}
\end{align}
where $W_t, 0\leq t\leq T$ and $V_t,0\leq t\leq T$ are two independent Wiener
processes, $f\left(\cdot ,\cdot \right),a\left(\cdot ,\cdot
\right),b\left(\cdot ,\cdot \right)$ and $\sigma \left(\cdot \right)$ are
known functions, $\varepsilon\in (0,1] $ and $\psi_\varepsilon \in (0,1]$
    are noise intensities. The initial value $y_0$ is deterministic.
    The parameter $\vartheta \in\Theta =\left(\alpha ,\beta \right)$ is
    unknown and has to be estimated using the observations $X^T=\left(X_t,0\leq t\leq
    T\right)$. The Gaussian process $Y^T=\left(Y_t,0\leq t\leq T\right)$ is
    unobservable (hidden).
Construction of the maximum likelihood estimator (MLE) $\hat\vartheta
_\varepsilon $ and the Bayesian estimator (BE) $\tilde\vartheta _\varepsilon $ is
based on the likelihood  function \cite{LS01}
\begin{align}
\label{03}
L\left(\vartheta ,X^T\right)=\exp\left\{\int_{0}^{T}\frac{M\left(\vartheta
  ,t\right)}{\varepsilon ^2\sigma \left(t\right)^2}{\rm
  d}X_t-\int_{0}^{T}\frac{M\left(\vartheta ,t\right)^2}{2\varepsilon ^2\sigma
  \left(t\right)^2}{\rm d}t\right\},\qquad \vartheta \in\Theta .
\end{align}
Here $M\left(\vartheta ,t\right)=f\left(\vartheta ,t\right)m\left(\vartheta
,t\right)$ and the conditional expectation $m\left(\vartheta
,t\right)=\Ex_{\vartheta }\left(Y_t|X_s,0\leq s\leq t\right)$ is the solution of
the Kalman-Bucy filtering equations \cite{KB61}.

The MLE is solution of the following equation
\begin{align}
\label{04}
L\left(\hat\vartheta
_\varepsilon ,X^T\right)=\sup_{\vartheta \in\Theta }L\left(\vartheta ,X^T\right).
\end{align}
If this equation has more than one solution, then any one of them can be taken
as MLE.

To introduce BE we assume that the unknown parameter $\vartheta $ is a random variable
with  known density $p\left(\vartheta \right),\vartheta \in\Theta $. Then for
quadratic loss function the BE is the conditional expectation
\begin{align}
\label{05}
\tilde\vartheta _\varepsilon =\int_{\Theta }^{}\vartheta p\left(\vartheta
|X^T\right)\,{\rm d}\vartheta  ,\qquad p\left(\vartheta
|X^T\right)=\frac{p\left(\vartheta \right)L\left(\vartheta
  ,X^T\right)}{\int_{\Theta }^{}p\left(\theta \right)L\left(\theta
,X^T\right) {\rm d}\theta }.
\end{align}

Systems as \eqref{01} and \eqref{02}, either in continuous or discrete time, are
the typical models to which the Kalman-Bucy method is applicable.
Nowadays it is widely used in various branches of sciences and technology: industrial production \cite{AHG13}, GPS localization
\cite{HCCL03},\cite{Kut20b},\cite{ZB11}, chemistry and biochemistry
\cite{DD03},\cite{DM92}, \cite{Ru91}, physics \cite{KSGT07}, finance
\cite{W10}. This is only a short list, which can be easily extended. Note that
similar models   were studied in non linear filtration with small
noise in observations (see  \cite{FP89},\cite{Pic91} and references there in).

The engineering literature on adaptive Kalman-Bucy filtering is vast, but mathematical
study of statistical problems for such systems is not yet sufficiently
developed. Statistical problems for discrete time models were studied more extensively, see e.g.
monographs \cite{CMT05}, \cite{EAM95} and \cite{EM02}.  The continuous time
hidden Markov processes with discrete state space and  white Gaussian
noise observations were studied in \cite{PCh09},\cite{KhK18}.

For continuous time linear  systems such as \eqref{01}-\eqref{02} with
constant functions  $f\left(\vartheta ,t \right)=f\left(\vartheta
\right),a\left(\vartheta ,t \right)=a\left(\vartheta \right),b\left(\vartheta
,t \right)=b\left(\vartheta \right),\sigma \left(t \right)=\sigma $,
($\varepsilon =1,\psi _\varepsilon =1$), asymptotic analysis with respect to $T\rightarrow \infty$
appeared in  \cite{Kut84}, \cite{KS91}, \cite{Kut19b}. The survey \cite{Kut20a} reviews some results
on parameter estimation in the model \eqref{01}-\eqref{02} in both large time and small noise asymptotics.

The partially observed system  \eqref{01}-\eqref{02} with $\psi
 _\varepsilon =\varepsilon \rightarrow 0$ as well as some of its generalizations were
 studied in \cite{Kut94}, Chapter 6. Let us briefly recall some of the results in \cite{Kut94} and
 compare them with with those  obtained recently. The processes $X^T,Y^T$ as $\varepsilon \rightarrow 0$
 converge to the deterministic solutions of the ordinary differential
 equations
\begin{align}\label{06}
\frac{{\partial }x_t\left(\vartheta \right)}{{\partial}t}=f\left(\vartheta
,t\right)y_t\left(\vartheta \right), \quad \qquad
\frac{{\partial}y_t\left(\vartheta \right)}{{\partial}t}=-a\left(\vartheta
,t\right)y_t\left(\vartheta \right),
\end{align}
with initial values $x_0\left(\vartheta \right)=0 $ and $ y_0\left(\vartheta
\right)=y_0$ respectively. It is shown that under appropriate regularity conditions the
MLE $\hat\vartheta _\varepsilon $ and BE $\tilde\vartheta _\varepsilon $ are
consistent, asymptotically normal
\begin{align}
\label{07}
\frac{\hat\vartheta _\varepsilon -\vartheta _0}{\varepsilon }\Longrightarrow
\hat\zeta \sim {\cal N}\left(0,\hat{\rm I}\left(\vartheta _0\right)^{-1}\right),\qquad
\quad \frac{\tilde\vartheta _\varepsilon -\vartheta _0}{\varepsilon
}\Longrightarrow \hat\zeta,
\end{align}
the moments converge and  both estimators are asymptotically efficient. Here
$\vartheta _0$ is the true parameter value and $\hat{\rm I}\left(\vartheta _0\right) $ is
the Fisher information,
\begin{align*}
&\int_{0}^{T}\frac{\dot  M\left(\vartheta_0,t \right)^2}{\sigma\left(t\right) ^{2}}{\rm d}t \longrightarrow \int_{0}^{T}\frac{\left[\dot f\left(\vartheta_0
  ,t\right)y_t\left(\vartheta_0 \right)+f\left(\vartheta_0 ,t\right)\dot
  y\left(\vartheta_0,t \right)\right]^2}{\sigma\left(t\right) ^{2}}{\rm
  d}t=\hat{\rm I}\left(\vartheta_0\right).
\end{align*}
Here and below the derivative in $\vartheta$ are denoted by dots and
$$
\dot
  M\left(\vartheta_0,t \right)=\dot f\left(\vartheta_0
  ,t\right)m\left(\vartheta_0 ,t\right)+f\left(\vartheta_0 ,t\right)\dot
  m\left(\vartheta_0,t \right).
$$
The function $\dot y\left(\vartheta_0,t \right)$ is distinct from $\dot y_t\left(\vartheta \right)$ and is the solution of a
certain auxiliary linear equation. It is important to note  that if $y_0=0$, then $\hat{\rm I}\left(\vartheta\right)=0
  $. Therefore $y_0\not=0$ is a necessary condition for  \eqref{07}.

Another instance of the model \eqref{01}-\eqref{02} with $\psi _\varepsilon =1$ was studied in
\cite{Kut19a}. In this case the limit system is
\begin{align}
\label{08}
\tilde x_t&=f\left(\vartheta
,t\right)Y_t\left(\vartheta \right),\qquad \quad \tilde x_0=0, \\
{\rm d}Y_t&=a\left(\vartheta ,t\right)Y_t{\rm d}t+
b\left(\vartheta ,t\right) {\rm d}V_t,\quad Y_0=0,\quad 0\leq
t\leq T ,
\label{09}
\end{align}
where we denoted $\tilde x_t=\frac{{\partial }X_t\left(\vartheta
  \right)}{{\partial}t} $.  Roughly speaking the question of consistency of
estimators is reduced to the following one: {\it is it possible to determine
  the parameter $\vartheta $ exactly, using the observations $\tilde x^T=(\tilde x_t,0\leq
  t\leq T)$}? In \cite{Kut19a} it was shown that if the identifiability
condition
\begin{align}
\label{10}
\inf_{\left|\vartheta -\vartheta _0\right|>\nu
}\int_{0}^{T}\frac{\left[S\left(\vartheta ,t\right)-S\left(\vartheta_0
  ,t\right)\right]^2}{S\left(\vartheta ,t\right)\sigma \left(t\right)}{\rm d}t >0,\quad \forall \nu>0,
\end{align}
where $S\left(\vartheta ,t\right)=f\left(\vartheta ,t\right)b\left(\vartheta
,t\right) $, is satisfied, then the answer to the above question is in positive.

Moreover, under mild regularity conditions the MLE and BE are
asymptotically normal
\begin{align}
\label{11}
\frac{\hat\vartheta _\varepsilon -\vartheta _0}{\sqrt{\varepsilon} }\Longrightarrow
\zeta \sim {\cal N}\left(0,{\rm I}\left(\vartheta _0\right)^{-1}\right),\qquad
\quad \frac{\tilde\vartheta _\varepsilon -\vartheta _0}{\sqrt{\varepsilon}
}\Longrightarrow \zeta,
\end{align}
 the moments converge and these estimators are asymptotically efficient. Here
 $\hat{\rm I}\left(\vartheta\right) $ the Fisher information is given by a different expression.

This model has an interesting feature. If $f\left(\vartheta
 ,t\right)=f\left(t\right), b\left(\vartheta ,t\right)=b\left(t\right)$ or
 $f\left(\vartheta ,t\right)=\vartheta f\left(t\right), b\left(\vartheta
 ,t\right)=\vartheta ^{-1}b\left(t\right)$ then the
 identifiability condition \eqref{10} fails and  convergence \eqref{11}
does not hold. Note also that condition \eqref{10} does not depend on
 $a\left(\vartheta ,t\right)$ and $y_0=0$. Remark that the condition $y_0=0$
was omitted in \cite{Kut19a}, but the limit of estimators in the case
$y_0\not=0$ is different. 

It was shown that
\begin{align}
\label{12}
\int_{0}^{T}\frac{\dot  M\left(\vartheta_0,t \right)^2}{\sigma\left(t\right) ^{2}}\;{\rm
  d}t\xrightarrow{\varepsilon  \rightarrow 0}  0.
\end{align}
and
\begin{align*}
\frac{\dot f\left(\vartheta_0
  ,t\right)m\left(\vartheta_0 ,t\right)+f\left(\vartheta_0 ,t\right)\dot
  m\left(\vartheta_0,t \right)}{\sqrt{\varepsilon }\sigma \left(t\right)}\Longrightarrow h\left(t\right)\xi
_t,\qquad t\in (0,T],
\end{align*}
where $h\left(\cdot \right)$ is a bounded function and $\xi _t,t\in (0,T]$ is
  a family of independent Gaussian ${\cal N}\left(0,1\right)$ random
  variables. However, the convergence
\begin{align*}
\int_{0}^{T}\frac{\dot  M\left(\vartheta_0,t \right)^2}{\varepsilon \sigma\left(t\right) ^{2}}\;{\rm
  d}t\Longrightarrow \int_{0}^{T}h\left(t\right)^2\xi
_t^2{\rm d}t,
\end{align*}
does not hold for a number of reasons. In particular, the latter integral does not exist in any reasonable
sense because $\xi_t,t\in (0,T]$ is not a separable process. Instead, it is shown in \cite{Kut19a} that
\begin{align}
\label{13}
&\int_{0}^{T}\frac{\dot  M\left(\vartheta_0,t \right)^2}{\varepsilon \sigma\left(t\right) ^{2}}\;{\rm
  d}t\longrightarrow \int_{0}^{T}h\left(t\right)^2{\rm
  d}t =\int_{0}^{T}\frac{\dot S\left(\vartheta_0
  ,t\right)^2}{2S\left(\vartheta_0 ,t\right)\sigma \left(t\right)}{\rm d}t ={\rm I}\left(\vartheta\right).
\end{align}

The present work is concerned the setting, intermediate between these two. We
consider the model \eqref{01}-\eqref{02} with
$\varepsilon \rightarrow 0$, $\psi _\varepsilon \rightarrow 0$ and
\begin{align}
\label{14}
\frac{\varepsilon }{\psi _\varepsilon ^3}\;\longrightarrow \;0.
\end{align}
For $\psi _\varepsilon =\varepsilon ^\delta $,  \eqref{14}
corresponds to $\delta \in \left(0,\frac{1}{3}\right)$. The limit system in
this case coincides with \eqref{06}, we have the convergence  \eqref{12} and
the convergence as in  \eqref{13} but with different normalization
\begin{align*}
\int_{0}^{T}\frac{\dot  M\left(\vartheta_0,t \right)^2}{\varepsilon\psi _\varepsilon \;\sigma\left(t\right)
  ^{2}}\;{\rm d}t\longrightarrow {\rm I}\left(\vartheta_0\right)
\end{align*}
where  Fisher information is given by \eqref{13}.
It will be shown that under suitable regularity conditions, the MLE and BE are consistent,
asymptotically normal
\begin{align}
\label{conv}
\sqrt{\frac{\psi _\varepsilon }{\varepsilon }}\left(\hat\vartheta _\varepsilon
-\vartheta _0\right)\Longrightarrow
\zeta \sim {\cal N}\left(0,{\rm I}\left(\vartheta _0\right)^{-1}\right),\qquad
\sqrt{\frac{\psi _\varepsilon }{\varepsilon }}\left(\tilde\vartheta
_\varepsilon -\vartheta _0\right)\Longrightarrow \zeta,
\end{align}
the moments converge and the both estimators are asymptotically efficient.

The error asymptotics  implied by \eqref{conv}  is somewhat surprising,
\begin{align*}
\Ex_{\vartheta _0}\left(\hat\vartheta _\varepsilon -\vartheta _0
\right)^2=\frac{\varepsilon\left(1+o\left(1\right)\right) }{\psi _\varepsilon
  {\rm I}\left(\vartheta _0\right)} ,\qquad \Ex_{\vartheta
  _0}\left(\tilde\vartheta _\varepsilon -\vartheta _0
\right)^2=\frac{\varepsilon\left(1+o\left(1\right)\right) }{\psi _\varepsilon
  {\rm I}\left(\vartheta _0\right)}.
\end{align*}
This means that {\it larger noise in the state equation causes  smaller  estimation errors}
(Theorem \ref{T1} below).  The best case corresponds to the
situation, where the noise in the state equation does not tend to zero ($\psi
_\varepsilon =1$). We say {\it surprising} because for the values $\psi
_\varepsilon =\varepsilon ^\delta , \delta \in (\frac{1}{3},1]$ the situation changes
  essentially,  $\psi _\varepsilon $ became a  {\it true noise} and the
  normalization of estimators is different (see section 3).

Convergence of MLE and BE for the model \eqref{01}-\eqref{02} with $\psi _\varepsilon =\varepsilon $
has a different rate in the case of change-point in the observation equation, i.e., if $f\left(\vartheta
,t\right)=q\left(t\right)\1_{\left\{t<\vartheta
  \right\}}+r\left(t\right)\1_{\left\{t\geq \vartheta \right\}}$. Then it can be shown
that
\begin{align*}
\frac{\hat\vartheta _\varepsilon -\vartheta _0}{\varepsilon
  ^2}\Longrightarrow \zeta _*,\qquad \frac{\tilde\vartheta _\varepsilon
  -\vartheta _0}{\varepsilon ^2}\Longrightarrow \zeta^*,
\end{align*}
where $\zeta _*$ and $\zeta ^*$ are two different random
variables. Only the BE is asymptotically efficient in this case.
When the change-point is introduced into the state equation
$a\left(\vartheta,t\right)=q\left(t\right)\1_{\left\{t<\vartheta
  \right\}}+r\left(t\right)\1_{\left\{t\geq \vartheta \right\}} $, the
estimators are asymptotically normal with {\it regular rate}  $\varepsilon $
as in \eqref{07}, see \cite{Kut94}.

\section{Main result}

Consider the observation model
\begin{align*}
{\rm d}X_t&=f\left( \vartheta, t\right)Y_t\;{\rm
   d}t+\varepsilon \sigma\left(t\right)\;{\rm d}W_t ,\qquad X_0=0, \qquad 0\leq
t\leq T ,\\
{\rm d}Y_t&=a\left(\vartheta ,t\right)Y_t\;{\rm d}t+\psi_\varepsilon
b\left(\vartheta ,t\right) \;{\rm d}V_t,\quad Y_0=0,\quad 0\leq
t\leq T.
\end{align*}
Our goal is to estimate $\vartheta$ using the observations $X^T$. To this end, we will study the
asymptotic behavior of  the MLE and BE  defined in \eqref{04} and \eqref{05} respectively.
In the case of BE we assume that the density $p\left(\cdot \right)
 $ is a continuous positive function on $\Theta=\left(\alpha ,\beta \right)
 $.

 These estimators are based on the family of stochastic  processes
 $(m\left(\vartheta ,t\right),$ $0\leq t\leq T),$ $ \vartheta \in\Theta
 $,
where the conditional expectation  $m\left(\vartheta ,t\right),0\leq t\leq T$ satisfies the
 Kalman-Bucy filtering equations \cite{KB61} (see details in
\cite{LS01}, Theorem 10.1)
\begin{align}
\label{2-1}
{\rm d}m\left(\vartheta ,t\right)&=-a(\vartheta,t)m\left(\vartheta ,t\right){\rm d}t\nonumber\\
&\qquad \qquad +
\frac{\gamma \left(\vartheta ,t\right)f\left(\vartheta,
  t\right)}{\varepsilon ^2\sigma(t)^2} \left[{\rm d}X_t-f\left(\vartheta,
  t\right)m\left(\vartheta ,t\right){\rm d}t\right],
\end{align}
subject to $m\left(\vartheta ,0\right)=0$, where the function $\gamma \left(\vartheta
,t\right)=\Ex_\vartheta \left(m\left(\vartheta ,t\right)-Y_t \right)^2 $ solves
the Riccati equation
\begin{align}
\label{2-2}
\frac{\partial \gamma \left(\vartheta ,t\right)}{\partial t}=
-2a(\vartheta,t)\gamma \left(\vartheta ,t\right) -\frac{\gamma \left(\vartheta
  ,t\right)^2f\left(  \vartheta t\right)^2}{ \varepsilon ^2\sigma(t)^2  } +\psi_\varepsilon
^2b(\vartheta,t)^2,
\end{align}
subject to initial condition $\gamma \left(\vartheta ,0\right)=0  $.

The true value will be denoted by $\vartheta _0$. The equations \eqref{2-1} and
 \eqref{2-2} generate the conditional expectation if $\vartheta =\vartheta
 _0$. For other values of $\vartheta $ these equations define stochastic processes,
 which do not coincide in general with the conditional expectation  of $Y_t$.
 It will be convenient to introduce the function $\gamma_* \left(\vartheta
 ,t\right)= \gamma \left(\vartheta ,t\right)/\left(\varepsilon \psi
   _\varepsilon\right) $. Equations \eqref{2-1} and
 \eqref{2-2}  can be written as
\begin{align}
\label{2-3}
{\rm d}m\left(\vartheta ,t\right)&=-q_\varepsilon \left(\vartheta ,t\right)m\left(\vartheta
,t\right){\rm d}t +\frac{\psi_\varepsilon
}{\varepsilon } \frac{\gamma_* \left(\vartheta
  ,t\right)f\left(\vartheta, t\right)}{\sigma(t)^2}\; {\rm d}X_t,
\\
\frac{\varepsilon }{\psi_\varepsilon }\frac{\partial
  \gamma _*\left(\vartheta ,t\right)}{\partial t}&= -\frac{2\varepsilon
}{\psi_\varepsilon }a(\vartheta,t)\gamma _*\left(\vartheta
,t\right) -\frac{\gamma_* \left(\vartheta ,t\right)^2f\left(
  \vartheta, t\right)^2}{ \sigma(t)^2 }  +b(\vartheta,t)^2,
\label{2-4}
\end{align}
subject to $m\left(\vartheta ,0\right)=0,\gamma_* \left(\vartheta ,0\right)=0 $ and where
\begin{align*}
q_\varepsilon \left(\vartheta ,t\right)=a(\vartheta,t)+\frac{\psi_\varepsilon
}{\varepsilon }\frac{\gamma_* \left(\vartheta ,t\right)f\left(\vartheta,
  t\right)^2}{\sigma(t)^2}.
\end{align*}
 In the case  $\vartheta =\vartheta _0$ we have
\begin{align*}
{\rm d}m\left(\vartheta_0 ,t\right)&=-a(\vartheta_0,t)m\left(\vartheta_0
,t\right){\rm d}t +{\psi_\varepsilon } \frac{\gamma_* \left(\vartheta_0
  ,t\right)f\left(\vartheta_0, t\right)}{\sigma(t)}\; {\rm d}\bar W_t,\;
m\left(\vartheta _0,0\right)=0.
\end{align*}
 Here $\bar W_t,0\leq
t\leq T$ is the {\it innovation} Wiener process (see Theorem 7.12 in \cite{LS01}).

Finding the MLE $\hat\vartheta _\varepsilon $ and BE $\tilde\vartheta _\varepsilon $ by means of \eqref{04},\eqref{05}
requires solving equations \eqref{2-3},\eqref{2-4} for all $\vartheta \in\Theta$ and is therefore computationally inefficient.
In Section 3 below we discuss the possibility of estimating $\vartheta $ using a much more simple algorithm.

The properties of estimators will be derived under the following regularity conditions.

{\it Conditions} {${\cal A}$.}
\begin{description}
\item[${\cal A}_1$.]{\it The functions $f\left(\vartheta  ,t\right), a\left(\vartheta,
  t\right),b\left(\vartheta, t\right),  t\in \left[0,T\right],\vartheta
  \in\Theta$  and $\sigma \left(t\right), t\in \left[0,T\right]$ have
  continuous derivatives in $t$.   }
\item[${\cal A}_2$.]{\it The functions $f\left(\vartheta
  ,t\right),b\left(\vartheta
  ,t\right), t\in \left[0,T\right],\vartheta
  \in\Theta$  and  $\sigma \left(t\right),0\leq t\leq T$  are  separated from zero.   }
\item[${\cal A}_3$.]{\it The functions $f\left(\vartheta
  ,t\right), a\left(\vartheta,
  t\right),b\left(\vartheta, t\right), t\in \left[0,T\right],\vartheta
  \in\Theta$  are   two times continuously differentiable in $\vartheta  $
  and the derivatives $\dot f\left(\vartheta ,t\right),\dot b\left(\vartheta
  ,t\right)$ have continuous derivatives in $t$. }
\item[${\cal A}_4$.]{\it The function $\psi_\varepsilon=\varepsilon ^\delta \rightarrow 0   $, where $0<\delta <\frac{1}{3}$. }
\end{description}

For the sake of simplicity and without loss of generality, we assume that the functions $f\left(\cdot
\right),b\left(\cdot \right) $ and $\sigma \left(\cdot \right)$ are positive.

The Fisher information in this problem is
\begin{align*}
{\rm I}\left(\vartheta\right)=\int_{0}^{T} \frac{\dot S\left(\vartheta
  ,t\right)^2}{2S\left(\vartheta ,t\right)\sigma \left(t\right)} \;{\rm d}t.
\end{align*}
Define the function
\begin{align*}
G\left(\vartheta ,\vartheta _0\right)=\int_{0}^{T}\frac{\left[S\left(\vartheta
    ,t\right)-S\left(\vartheta_0 ,t\right)\right]^2}{4S\left(\vartheta
  ,t\right)\sigma \left(t\right) } \,{\rm d}t.
\end{align*}

{\it Conditions} {${\cal B}$.}
\begin{description}
\item[${\cal B}_1$.]{\it The Fisher information is  positive
\begin{align*}
\inf_{\vartheta \in\Theta }{\rm I}\left(\vartheta\right)>0.
\end{align*}
   }
\item[${\cal B}_2$.]{\it For any $\vartheta _0\in \Theta $ and $\nu >0$}
\begin{align*}
\inf_{\left|\vartheta -\vartheta _0\right|>\nu }G\left(\vartheta ,\vartheta _0\right)>0.
\end{align*}

\end{description}

 The family of measures $\left\{\Pb_{\vartheta }^{\left(\varepsilon
   \right)},\vartheta \in\Theta \right\}$  induced by the observation process
 $X^T=\left(X_t,0\leq t\leq
 T\right)$ on the space
    $\left({\cal C}\left[0,T\right],{\scr B}\right)$ of continuous functions on
 $\left[0,T\right]$ is locally asymptotically normal (LAN) (see Lemma \ref{L4}
 below). Therefore the  mean squared estimation error satisfies the Hajek-Le
 Cam's  minimax lower bound:
 for any $\vartheta _0\in\Theta $ and any estimator $\bar\vartheta _\varepsilon  $
\begin{align}
\label{2-5}
\lim_{\nu  \rightarrow 0}\Liminf_{\varepsilon \rightarrow
  0}\sup_{\left|\vartheta -\vartheta _0\right|\leq \nu  } \frac{\psi_\varepsilon}{\varepsilon
}\Ex_{\vartheta} \left|\bar\vartheta_\varepsilon
-\vartheta\right|^2\geq  {\rm I}\left(\vartheta_0 \right)^{-1}.
\end{align}
The estimator $\vartheta _\varepsilon ^*$ is called asymptotically efficient if
for any $\vartheta _0\in\Theta $
\begin{align*}
\lim_{\nu  \rightarrow 0}\lim_{\varepsilon \rightarrow
  0}\sup_{\left|\vartheta -\vartheta _0\right|\leq \nu } \frac{\psi_\varepsilon}{\varepsilon
}\Ex_{\vartheta} \left|\vartheta_\varepsilon ^*
-\vartheta\right|^2=  {\rm I}\left(\vartheta_0 \right)^{-1}.
\end{align*}
For the proof of a more general result see, e.g., \cite{IH81}.

The main result of this paper is the following theorem.

\medskip

\begin{theorem}
\label{T1}
 Assume that conditions ${\cal A}$ and ${\cal B}$ are satisfied. Then the
 MLE $\hat\vartheta_\varepsilon $ and BE $\tilde\vartheta_\varepsilon $ are
 consistent, asymptotically normal
\begin{align*}
& \sqrt{\frac{\psi_\varepsilon}{\varepsilon }}\left(\hat\vartheta_\varepsilon
  -\vartheta_0\right) \Longrightarrow \zeta \sim {\cal N}\left(0,{\rm
    I}\left(\vartheta_0 \right)^{-1}\right),\qquad
  \sqrt{\frac{\psi_\varepsilon}{\varepsilon
  }}\left(\tilde\vartheta_\varepsilon -\vartheta_0\right) \Longrightarrow
  \zeta ,
\end{align*}
the convergence of moments holds,
\begin{align*}
\left(\frac{\psi_\varepsilon}{\varepsilon
}\right)^{\frac{p}{2}}\Ex_{\vartheta _0} \left|\hat\vartheta_\varepsilon
-\vartheta_0\right|^p\rightarrow \Ex_{\vartheta _0} \left|\zeta
\right|^p,\quad \left(\frac{\psi_\varepsilon}{\varepsilon
}\right)^{\frac{p}{2}}\Ex_{\vartheta _0} \left|\tilde\vartheta_\varepsilon
-\vartheta_0\right|^p\rightarrow \Ex_{\vartheta _0} \left|\zeta \right|^p,
\end{align*}
for any $p>0$,
and  both estimators are asymptotically efficient.
\end{theorem}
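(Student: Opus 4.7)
The plan is to apply the Ibragimov-Has'minskii method (Theorems I.10.1 and I.10.2 in \cite{IH81}) to the normalized likelihood ratio. Let $\varphi_\varepsilon = \sqrt{\varepsilon/\psi_\varepsilon}$ (which tends to $0$ under ${\cal A}_4$ since $1-\delta>2/3$) and set $Z_\varepsilon(u) = L(\vartheta_0+\varphi_\varepsilon u, X^T)/L(\vartheta_0,X^T)$ for $u$ in the expanding set $U_\varepsilon=(\alpha-\vartheta_0,\beta-\vartheta_0)/\varphi_\varepsilon$. It then suffices to establish three properties of $Z_\varepsilon$: (i) convergence of finite-dimensional distributions to the Gaussian limit $Z(u)=\exp\{u\Delta - \tfrac12 u^2 {\rm I}(\vartheta_0)\}$ with $\Delta\sim\mathcal N(0,{\rm I}(\vartheta_0))$; (ii) a uniform Hölder-type bound $\Ex_{\vartheta_0}|Z_\varepsilon^{1/2}(u_1)-Z_\varepsilon^{1/2}(u_2)|^2 \le C(1+R^m)|u_1-u_2|^\alpha$ on compacts $|u_i|\le R$; and (iii) a polynomial/exponential decay bound $\Ex_{\vartheta_0} Z_\varepsilon^{1/2}(u)\le e^{-c g(|u|)}$ for $|u|$ large, strong enough to control the moments. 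Standard arguments from \cite{IH81} then yield the weak convergence, moment convergence, and asymptotic efficiency (in view of the minimax lower bound \eqref{2-5} derived from LAN, Lemma \ref{L4}) simultaneously for the MLE and BE.

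The analytic backbone of (i)-(iii) is an asymptotic analysis of the Riccati equation \eqref{2-4}. Since $\varepsilon/\psi_\varepsilon\to 0$, equation \eqref{2-4} is singularly perturbed, and the outer solution satisfies $\gamma_*(\vartheta,t)f(\vartheta,t)^2/\sigma(t)^2 = b(\vartheta,t)^2$, so
\begin{align*}
\gamma_*(\vartheta,t) \longrightarrow \frac{\sigma(t)\, b(\vartheta,t)}{f(\vartheta,t)} \qquad\text{and}\qquad \gamma(\vartheta,t)\approx \varepsilon\psi_\varepsilon\,\frac{\sigma(t)\,b(\vartheta,t)}{f(\vartheta,t)}
\end{align*}
uniformly on $[t_0,T]\times\Theta$ for any $t_0>0$, with corresponding uniform convergence for $\dot\gamma_*$ and $\ddot\gamma_*$ obtained by differentiating \eqref{2-4} in $\vartheta$. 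This identifies the Fisher information as ${\rm I}(\vartheta_0)=\int_0^T \dot S^2/(2S\sigma)\,dt$ in accordance with \eqref{13}. An initial boundary layer of width $O(\varepsilon/\psi_\varepsilon)$ at $t=0$ has to be controlled separately; one shows that its contribution to all relevant integrals is $o(1)$, using the explicit linear-in-$\gamma_*$ upper envelope of \eqref{2-4} near $t=0$. The associated filter equation \eqref{2-3} is then a linear SDE whose solution $m(\vartheta_0,t)$, together with $\dot m(\vartheta_0,t)$, can be analyzed explicitly.

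The log-likelihood ratio is written via the innovation representation $dX_t - M(\vartheta_0,t)\,dt=\varepsilon\sigma(t)\,d\bar W_t$ as
\begin{align*}
\ln Z_\varepsilon(u)=\int_0^T\frac{M(\vartheta_u,t)-M(\vartheta_0,t)}{\varepsilon\sigma(t)}\,d\bar W_t-\frac12\int_0^T\frac{[M(\vartheta_u,t)-M(\vartheta_0,t)]^2}{\varepsilon^2\sigma(t)^2}\,dt,
\end{align*}
with $\vartheta_u=\vartheta_0+\varphi_\varepsilon u$. Taylor expanding in $u$ and using the asymptotics of $m,\dot m,\ddot m$ deduced from the Riccati analysis, one obtains $\ln Z_\varepsilon(u)=u\Delta_\varepsilon-\tfrac12 u^2 {\rm I}(\vartheta_0)+r_\varepsilon(u)$ with $\Delta_\varepsilon=\varphi_\varepsilon\int_0^T\dot M(\vartheta_0,t)/(\varepsilon\sigma(t))\,d\bar W_t$ having deterministic quadratic variation $\int_0^T \dot M(\vartheta_0,t)^2/(\varepsilon\psi_\varepsilon\sigma(t)^2)\,dt \to {\rm I}(\vartheta_0)$, so Gaussian CLT for Itô integrals gives $\Delta_\varepsilon\Rightarrow\Delta$. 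This yields (i); the remainder $r_\varepsilon(u)$ is controlled via the same Riccati estimates to give (ii). For the large-$|u|$ bound (iii), one uses the identifiability condition ${\cal B}_2$, which provides a lower bound $\int_0^T (M(\vartheta_u,t)-M(\vartheta_0,t))^2/(\varepsilon^2\sigma^2)\,dt \ge c\,G(\vartheta_u,\vartheta_0)/(\varepsilon\psi_\varepsilon)$ asymptotically, translating into the required decay of $\Ex_{\vartheta_0}Z_\varepsilon^{1/2}(u)$.

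The main technical obstacle is the uniform-in-$\vartheta$ asymptotic analysis of the singularly perturbed Riccati equation \eqref{2-4} and of the derivatives $\dot m,\ddot m$, in particular the need to match the initial layer at $t=0$ with the outer expansion and to transfer these estimates into the variance convergence $\int_0^T \dot M^2/(\varepsilon\psi_\varepsilon\sigma^2)\,dt\to {\rm I}(\vartheta_0)$ along lines parallel to \eqref{13} but with the modified normalization $\varepsilon\psi_\varepsilon$ that produces the unusual rate $\sqrt{\psi_\varepsilon/\varepsilon}$. Once these filter/Riccati estimates are in hand, the probabilistic part of the argument is a routine, if delicate, implementation of the Ibragimov-Has'minskii scheme.
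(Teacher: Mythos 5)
Your overall architecture --- normalized likelihood ratio at rate $\varphi_\varepsilon=\sqrt{\varepsilon/\psi_\varepsilon}$, the three Ibragimov--Has'minskii conditions (LAN, H\"older bound on $Z_\varepsilon^{1/2}$, polynomial tail bound via ${\cal B}_2$), and a singular-perturbation analysis of the Riccati equation \eqref{2-4} with an $O(\varepsilon/\psi_\varepsilon)$ boundary layer at $t=0$ --- is exactly the paper's (Lemmas \ref{L1}--\ref{L7}). However, there is one genuine gap at the heart of your step (i): you assert that $\Delta_\varepsilon$ has \emph{deterministic} quadratic variation $\int_0^T \dot M(\vartheta_0,t)^2/(\varepsilon\psi_\varepsilon\sigma(t)^2)\,{\rm d}t\to{\rm I}(\vartheta_0)$, as if this followed directly from the Riccati asymptotics. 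It does not. The correct expansion \eqref{2-14} shows that $\dot M(\vartheta_0,t)/\sqrt{\varepsilon\psi_\varepsilon}$ does not converge to a deterministic function: it behaves like $h(t)\,\xi_{t,\varepsilon}$ where $\xi_{t,\varepsilon}\Rightarrow\xi_t$ and $\xi_t$, $t\in(0,T]$, is a family of \emph{mutually independent} standard Gaussians arising from the fast local fluctuations of the filter ($\dot m$ is driven by increments of $\bar W$ over windows of width $O(\varepsilon/\psi_\varepsilon)$). Consequently the normalized quadratic variation is the random quantity $\int_0^T \frac{\dot S(\vartheta_0,t)^2}{2S(\vartheta_0,t)\sigma(t)}\xi_{t,\varepsilon}^2\,{\rm d}t$, and the pointwise limit object $\int h(t)^2\xi_t^2\,{\rm d}t$ does not even exist as an integral of a separable process --- this is precisely the difficulty the paper flags around \eqref{12}--\eqref{13}.

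What closes the gap is an averaging (law-of-large-numbers) estimate exploiting the asymptotic independence of the $\xi_{t,\varepsilon}$ across $t$: one must show that $N_\varepsilon=\sqrt{\psi_\varepsilon/\varepsilon}\int_0^T R(t)(\xi_{t,\varepsilon}^2-1)\,{\rm d}t$ has bounded moments of all orders (inequality \eqref{2-17}, borrowed from Lemma 4 of \cite{Kut19a}), whence the quadratic variation concentrates at ${\rm I}(\vartheta_0)$ in probability and the CLT for the stochastic integral with a \emph{random} integrand can be invoked. This same concentration estimate, together with its counterpart $Q_\varepsilon$ controlled by the condition $\varepsilon/\psi_\varepsilon^3\to0$ of ${\cal A}_4$, is also what makes your tail bound (iii) work on the local set $|u|\le\varphi_\varepsilon^{1/2}$ (Lemma \ref{L6}); without it the lower bound on $\int_0^T\Delta M_t^2/(\varepsilon^2\sigma^2)\,{\rm d}t$ cannot be made deterministic. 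Since the entire ``larger state noise gives smaller error'' phenomenon and the rate $\sqrt{\psi_\varepsilon/\varepsilon}$ rest on this step, treating the quadratic variation as deterministic is not a routine omission but the omission of the paper's central technical ingredient.
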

 \begin{proof} The proof of this theorem is based on the general results of
Ibragimov and Khasminskii \cite{IH81}.

Let us denote $M\left(\vartheta ,t\right)=f\left(\vartheta,
t\right)m\left(\vartheta ,t\right)$, where $m\left(\vartheta ,t\right) $ is the
solution of  equation \eqref{2-3}. The method is based on asymptotics of the likelihood ratios
\begin{align*}
 L\left(\vartheta ,X^T\right)=\exp\left\{\int_{0}^{T}
\frac{M\left(\vartheta ,t\right)}{\varepsilon ^2\sigma(t)^2}\,{\rm d}X_t- \int_{0}^{T}
\frac{M\left(\vartheta ,t\right)^2}{2\varepsilon ^2\sigma(t)^2}\,{\rm d}t\right\},\;\quad  \vartheta \in\Theta =\left(\alpha
,\beta \right).
\end{align*}
Define the normalized likelihood ratio process
\begin{align*}
Z_\varepsilon \left(u\right)=\frac{L\left(\vartheta_0+\varphi _\varepsilon u
  ,X^T\right)}{L\left(\vartheta_0 ,X^T\right)}, \qquad u\in\UU_\varepsilon
=\left(\frac{\alpha -\vartheta _0}{\varphi _\varepsilon },\frac{\beta
  -\vartheta _0}{\varphi _\varepsilon } \right),
\end{align*}
where $\varphi _\varepsilon =\sqrt{\varepsilon /\psi _\varepsilon }  $.
Let us first sketch the main idea of the proof of asymptotic normality of the MLE
$\hat\vartheta _\varepsilon $ (Theorem 3.1.1 in \cite{IH81}). Suppose that we already proved
the weak convergence of the random process $Z_\varepsilon \left(\cdot \right)$
to the limit process $Z\left(\cdot \right)$ as $\varepsilon\to 0$, where
\begin{align*}
Z\left(u\right)=\exp\left\{u\Delta \left(\vartheta _0\right)-\frac{u^2}{2}{\rm
    I}\left(\vartheta_0 \right) \right\},\qquad u\in {\cal R},
\end{align*}
and  $\Delta \left(\vartheta _0\right)\sim {\cal N}\left(0,{\rm
    I}\left(\vartheta_0 \right)\right)$. Then for any $x\in \cal R$
\begin{align*}
&\Pb_{\vartheta _0}\left(\sqrt{\frac{\psi _\varepsilon }{\varepsilon
  }}\left(\hat\vartheta _\varepsilon -\vartheta _0\right)<x
  \right)=\Pb_{\vartheta _0}\left(\hat\vartheta _\varepsilon <\vartheta
  _0+\varphi _\varepsilon x \right)\\
&\qquad \quad =\Pb_{\vartheta _0}\left(
  \sup_{\vartheta < \vartheta _0+\varphi _\varepsilon x} L\left(\vartheta
  ,X^T\right) >\sup_{\vartheta \geq \vartheta _0+\varphi _\varepsilon x}
  L\left(\vartheta ,X^T\right) \right)\\
 &\qquad \quad =\Pb_{\vartheta
    _0}\left( \sup_{\vartheta < \vartheta _0+\varphi _\varepsilon x}
  \frac{L\left(\vartheta ,X^T\right)}{L\left(\vartheta_0 ,X^T\right)}
  >\sup_{\vartheta \geq \vartheta _0+\varphi _\varepsilon x}
  \frac{L\left(\vartheta ,X^T\right)}{L\left(\vartheta_0 ,X^T\right)} \right)\\
 &\qquad \quad =\Pb_{\vartheta    _0}\left( \sup_{u <  x}  Z_\varepsilon \left(u\right)
  >\sup_{u \geq   x}  Z_\varepsilon \left(u\right) \right)\\
 &\qquad \quad \longrightarrow \Pb_{\vartheta    _0}\left( \sup_{u <  x}  Z \left(u\right)
  >\sup_{u \geq   x}  Z \left(u\right) \right)=\Pb_{\vartheta    _0}\left(\frac{\Delta \left(\vartheta _0\right)}{{\rm
    I}\left(\vartheta_0 \right)}<x\right).
\end{align*}
Here we changed the variable so that  $\vartheta =\vartheta _0+\varphi _\varepsilon
u$. Note that the random function $Z\left(\cdot \right)$ has the unique maximum at
the point $\zeta =\Delta \left(\vartheta _0\right){\rm
    I}\left(\vartheta_0 \right)^{-1}\sim {\cal N}\left(0,{\rm
    I}\left(\vartheta_0 \right)^{-1}\right) $.

Similar calculations show that the BE $\tilde\vartheta _\varepsilon $  is asymptotically  normal with the same
limit variance (see details in \cite{IH81}).

For the model under consideration
\begin{align*}
\ln Z_\varepsilon \left(u\right)=\int_{0}^{T}
\frac{M\left(\vartheta_u ,t\right)-M\left(\vartheta_0
  ,t\right)}{\varepsilon ^2\sigma(t)^2}\,{\rm d}X_t- \int_{0}^{T}
\frac{\left[M\left(\vartheta_u ,t\right)-M\left(\vartheta_0
    ,t\right)\right]^2}{2\varepsilon ^2\sigma(t)^2}\,{\rm d}t,
\end{align*}
where $\vartheta_u = \vartheta_0+\varphi _\varepsilon u$. The
formal Taylor formula yields
\begin{align*}
M\left(\vartheta_0+\varphi _\varepsilon u ,t\right)-M\left(\vartheta_0
,t\right)=\varphi _\varepsilon u \dot M\left(\vartheta _0,t\right)+
o\left(\varphi _\varepsilon \right) .
\end{align*}
Recall that  $\dot M\left(\vartheta ,t\right) $ is the
derivative in  $\vartheta $ of the random process $M\left(\vartheta
,t\right)$,
\begin{align}
\label{2-6}
\dot M\left(\vartheta ,t\right)=\dot f\left(\vartheta ,t\right)m\left(\vartheta
,t\right)+f\left(\vartheta, t\right)\dot m\left(\vartheta ,t\right).
\end{align}
Thus we need to find the asymptotics of the derivative $\dot m\left(\vartheta ,t\right) $ of
$ m\left(\vartheta ,t\right)$ and the related derivative $\dot \gamma_*
\left(\vartheta ,t\right)$ of $\gamma_* \left(\vartheta ,t\right)$, since the
equation \eqref{2-3} for $m\left(\vartheta ,t\right)$ depends on $\gamma_*
\left(\vartheta ,t\right)$.

Let us denote
\begin{align*}
 \gamma_0 \left(\vartheta
  ,t\right)&=\frac{b\left(\vartheta ,t\right)\sigma (t)}{f\left(\vartheta
  ,t\right)},\quad A\left(\vartheta ,t\right)=\frac{\gamma_* \left(\vartheta
  ,t\right)f\left(\vartheta,
  t\right)}{\sigma(t)^2} ,\\
 q_\varepsilon \left(\vartheta ,t\right)&=a\left(\vartheta ,t\right)+
  \frac{\psi_\varepsilon}{\varepsilon }A\left(\vartheta
  ,t\right)f\left(\vartheta ,t\right).
\end{align*}
Then equations \eqref{2-3}, \eqref{2-4} take the form
\begin{align}
\label{2-7}
&{\rm d}m\left(\vartheta ,t\right)=-q_\varepsilon \left(\vartheta
,t\right)m\left(\vartheta ,t\right){\rm d}t + \frac{\psi_\varepsilon}{\varepsilon }
A\left(\vartheta
  ,t\right) {\rm d}X_t, \\
\label{2-8}
&\frac{\partial \gamma _*\left(\vartheta ,t\right)}{\partial t}=
-2a(\vartheta,t)\gamma_* \left(\vartheta ,t\right)
-\frac{\psi_\varepsilon}{\varepsilon } A\left(\vartheta ,t\right)^2 {
  \sigma(t)^2 } +\frac{\psi_\varepsilon}{\varepsilon
}b(\vartheta,t)^2,
\end{align}
with the initial values $m\left(\vartheta ,0\right)= 0, \gamma _*\left(\vartheta
,0\right)=0$.

\begin{lemma}
\label{L1} Let the conditions ${\cal A}_1,{\cal A}_2$ be satisfied and
$\frac{\varepsilon }{\psi _\varepsilon }\rightarrow 0$. Then, for any $t_0\in
(0,T]$,
\begin{align}
\label{2-9}
\sup_{t_0\leq t\leq T}\left|\gamma_* \left(\vartheta ,t\right) -\gamma_0
\left(\vartheta ,t\right)\right|=O\left(\frac{ \varepsilon
}{\psi_\varepsilon}\right).
\end{align}
\end{lemma}
\begin{proof}
Introduce the additional  Riccati  equation
\begin{align}
\label{2-10}
\frac{\partial \hat\gamma\left(t\right)}{\partial t}=
-2a_m\hat \gamma \left(t\right) -\frac{\psi
  _\varepsilon}{\varepsilon } \frac{\hat\gamma
  \left(t\right)^2f_m^2}{S^2} +\frac{\psi _\varepsilon}{\varepsilon }B^2, \qquad \hat\gamma\left(0\right)=0 ,
\end{align}
where we denoted
\begin{align*}
a_m=\inf_{0\leq t\leq T}\inf_{\vartheta \in\Theta }\left|a\left(\vartheta
,t\right)\right| ,\qquad f_m=\inf_{0\leq t\leq T}\inf_{\vartheta \in\Theta
} f\left(\vartheta ,t\right) ,\quad  \sigma_M=\sup_{0\leq t\leq
  T} \sigma \left(t\right).
\end{align*}
Note that by condition ${\cal A}_2$ we have $f_m>0$. Due to the
comparison theorem for ordinary differential equations, $ \gamma_* \left(\vartheta ,t\right)\leq \hat
\gamma \left(t\right)$ holds  for all $t\in \left[0,T\right]$. This bound is intuitive, i.e., as replacing
the ``noise coefficient'' $\sigma\left(t\right)$ by its maximal value should increase the estimation
error.
Further, as we take smaller coefficient $f_m $ in the drift $f\left(\vartheta
,t\right)Y_t$ the error increases as well. The solution of  Riccati equation \eqref{2-10} with
constant coefficients can be written explicitly (see \cite{A83})
\begin{align*}
\hat\gamma \left(t\right)=e^{-2R_\varepsilon t}\left[\frac{\psi
    _\varepsilon f_m^2\left(1-e^{-2R_\varepsilon
      t}\right)}{2\varepsilon R_\varepsilon \sigma_M^2}-\hat\gamma
  ^{-1}\right]^{-1}+\hat\gamma
\end{align*}
Here
\begin{align*}
R_\varepsilon =\left(a_m^2+\frac{\psi _\varepsilon^2}{\varepsilon
  ^2}\frac{B^2f_m^2}{\sigma_M^2}\right)^{1/2} =\frac{\psi _\varepsilon}{\varepsilon }\frac{f_mB}{\sigma_M}\left(1+O\left(\frac{\varepsilon^2}{\psi
 _\varepsilon^2} \right)\right)
\end{align*}
and
\begin{align*}
\hat\gamma =\frac{\varepsilon}{\psi_\varepsilon}\frac{
  a_m\sigma_M^2}{f_m^2}\left[\left(1+\frac{\psi_\varepsilon^2}{\varepsilon
    ^2}\frac{B^2f_m^2}{a_m^2\sigma_M^2}\right)^{1/2}-1\right]=\frac{B\sigma_M}{f_m}\left(1+O\left(\frac{\varepsilon}{\psi
  _\varepsilon} \right)\right).
\end{align*}
Therefore for any $t_0\in(0,T]$
\begin{align}
\label{2-11}
\sup_{t_0\leq t\leq T}\left|\hat\gamma
\left(t\right)-\frac{B\sigma_M}{f_m}\right|=O\left(\frac{\varepsilon}{\psi
 _\varepsilon}\right).
\end{align}
Let us write the Riccati equation in integral form
\begin{align*}
\frac{\varepsilon}{\psi
  _\varepsilon}&\left[\gamma_* \left(\vartheta ,t\right)-\gamma_* \left(\vartheta
,t_0\right)+2\int_{t_0}^{t}a\left(\vartheta ,s\right)\gamma_* \left(\vartheta
,s\right){\rm d}s\right]\\
&\qquad \qquad  =-\int_{t_0}^{t}\frac{\gamma _*\left(\vartheta
  ,s\right)^2f\left(\vartheta ,s\right)^2}{\sigma \left(s\right)^2}{\rm d}s
+\int_{t_0}^{t}b\left(\vartheta ,s\right)^2{\rm d}s .
\end{align*}
As $\gamma_* \left(\vartheta ,t\right)\leq \hat\gamma \left(t\right) $ and
$\hat\gamma \left(t\right) $ according to \eqref{2-11} is bounded,  the left hand side of
the integral equation tends to 0. Hence
\begin{align*}
\int_{t_0}^{t}\frac{\gamma _*\left(\vartheta
  ,s\right)^2f\left(\vartheta ,s\right)^2}{\sigma \left(s\right)^2}{\rm d}s
=\int_{t_0}^{t}b\left(\vartheta ,s\right)^2{\rm d}s+O\left(\frac{\varepsilon}{\psi
  _\varepsilon}\right)
\end{align*}
and
\begin{align*}
\gamma _*\left(\vartheta
  ,t\right)^2
=\frac{b\left(\vartheta ,t\right)^2\sigma \left(t\right)^2}{ f\left(\vartheta
  ,t\right)^2} +O\left(\frac{\varepsilon}{\psi
  _\varepsilon}\right)=\gamma _0\left(\vartheta
  ,t\right)^2+O\left(\frac{\varepsilon}{\psi
  _\varepsilon}\right).
\end{align*}
\end{proof}
It can be checked that all terms of order $O\left(\frac{\varepsilon }{\psi
  _\varepsilon }\right)$ above satisfy
\begin{align*}
\left|O\left(\frac{\varepsilon }{\psi _\varepsilon }\right)\right|\leq C\,\frac{\varepsilon }{\psi _\varepsilon },
\end{align*}
where the constant  $C>0$ can be chosen independent on $\vartheta $.

Let us denote
\begin{align*}
\dot \gamma _0\left(\vartheta ,t\right)=\frac{  b(\vartheta,t)\sigma \left(t\right)}{f(\vartheta,t)}
\frac{\partial }{\partial \vartheta }   \left[\ln \frac{
    b(\vartheta,t)}{ f(\vartheta,t)}\right].
\end{align*}
\begin{lemma}
\label{L2}
Let the conditions ${\cal A}_1-{\cal A}_3$ be satisfied and $\frac{\varepsilon
}{\psi _\varepsilon }\rightarrow 0$, then, for any $t_0\in (0,T]$,
\begin{align}
\label{2-12}
\sup_{t_0\leq t\leq T}\left|\dot \gamma _*\left(\vartheta ,t\right)-\dot \gamma _0\left(\vartheta ,t\right)\right|=O\left(\frac{\varepsilon}{\psi
  _\varepsilon}\right)
\end{align}
\end{lemma}
\begin{proof}
The derivative $\dot \gamma _*\left(\vartheta ,t\right)$ solves the
equation
\begin{align}
\frac{\partial \dot \gamma_* \left(\vartheta ,t\right)}{\partial t }&=
-2q_\varepsilon \left(\vartheta ,t\right) \dot \gamma_*
\left(\vartheta ,t\right) +2\frac{\psi _\varepsilon}{\varepsilon
}b(\vartheta,t)\dot b(\vartheta,t)\nonumber\\
&\qquad  -2\left[\dot a\left(\vartheta ,t\right)   +\frac{\psi_\varepsilon
  }{\varepsilon } A\left(\vartheta  ,t\right) \dot f\left( \vartheta,
  t\right)\right] \gamma_*\left(\vartheta ,t\right)
\label{2-13}
\end{align}
subject to $\dot \gamma_* \left(\vartheta ,0\right)=0  $, which can be justified by the usual argument,
using the Gronwall-Bellman lemma.
This is linear equation
and its solution can be written as follows
\begin{align*}
 &\dot \gamma_* \left(\vartheta ,t\right)=-2\int_{0}^{t}e^{-2\int_{s}^{t}q_\varepsilon \left(\vartheta ,v\right){\rm
     d}v} \dot a\left(\vartheta ,s\right)\gamma _*\left(\vartheta ,s\right) {\rm d}s\\
 &\qquad +
\frac{2\psi_\varepsilon}{\varepsilon } \int_{0}^{t}e^{-2\int_{s}^{t}q_\varepsilon
   \left(\vartheta ,v\right){\rm d}v} \left[ b(\vartheta,s)\dot
   b(\vartheta,s)-\frac{\gamma _*\left(\vartheta ,s\right)^2f\left( \vartheta,
   s\right) \dot f\left( \vartheta,
   s\right)}{\sigma \left(s\right)^2}\right]{\rm d}s .
\end{align*}
By Lemma 1 in \cite{Kut19a} and \eqref{2-9} we have the representation
\begin{align*}
\dot \gamma_* \left(\vartheta ,t\right)&=\frac{ b(\vartheta,t)\dot
  b(\vartheta,t)\sigma \left(t\right)^2-\gamma _*\left(\vartheta
  ,t\right)^2f\left( \vartheta, t\right) \dot f\left( \vartheta,
  t\right)}{\gamma _*\left(\vartheta ,t\right) f\left( \vartheta, t\right)^2
}+O\left(\frac{\varepsilon}{\psi _\varepsilon}\right)\\
&= \frac{  b(\vartheta,t)\sigma \left(t\right)}{f(\vartheta,t)}\left[\frac{\dot
    b(\vartheta,t)}{ b(\vartheta,t)}-\frac{\dot
    f(\vartheta,t)}{ f(\vartheta,t)}\right]+O\left(\frac{\varepsilon}{\psi
  _\varepsilon}\right)\\
&= \frac{  b(\vartheta,t)\sigma \left(t\right)}{f(\vartheta,t)}
\frac{\partial }{\partial \vartheta }   \left[\ln \frac{
    b(\vartheta,t)}{ f(\vartheta,t)}\right]+O\left(\frac{\varepsilon}{\psi _\varepsilon}\right).
\end{align*}

\end{proof}

As before, the terms $O\left(\frac{\varepsilon}{\psi
  _\varepsilon}\right) $ can be shown to satisfy
\begin{align*}
\left|O\left(\frac{\varepsilon}{\psi _\varepsilon}\right)\right|\leq
C\;\frac{\varepsilon}{\psi _\varepsilon},
\end{align*}
with a constant  $C>0$ independent of $\vartheta $.

Below $O_p\left(\frac{\varepsilon }{\psi _\varepsilon }\right)$ and
$o_p\left(\frac{\varepsilon }{\psi _\varepsilon }\right)$    means that for
any p>1
\begin{align*}
\Ex_{\vartheta _0}\left|O_p\left(\frac{\varepsilon }{\psi _\varepsilon
}\right)\right|^p\leq C\left(\frac{\varepsilon }{\psi _\varepsilon
}\right)^p,\qquad \Ex_{\vartheta _0}\left|o_p\left(\frac{\varepsilon }{\psi
  _\varepsilon }\right)\right|^p\leq c_\varepsilon \left(\frac{\varepsilon
}{\psi _\varepsilon }\right)^p,\quad c_\varepsilon\rightarrow 0,
\end{align*}
where the constants $C>0$ and $c_\varepsilon >0$ do not depend on $\vartheta $.
Recall that $S\left(\vartheta ,t\right)=f\left(\vartheta ,t\right)b\left(\vartheta ,t\right)$.
\begin{lemma}
\label{L3} Let conditions ${\cal A}$ be satisfied, then
\begin{align}
\label{2-14}
\dot M\left(\vartheta _0,t\right)&= \sqrt{\varepsilon \psi_\varepsilon} \sqrt{\frac{\sigma \left(t\right) }{2S\left(\vartheta_0 ,t\right)}
} \; {\dot S\left(\vartheta_0,t\right)}{ }\; \xi _{t,\varepsilon}
\left(1+o\left(1\right)\right)+O_p\left(\frac{\varepsilon }{\psi
  _\varepsilon}\right),
\end{align}
with $\xi _{t,\varepsilon }\left(\vartheta \right)\;\Longrightarrow \;\xi _t\sim
 {\cal N}\left(0,1\right). $
Here $\xi _t,t\in (0,T]$ are mutually  independent random variables.
\end{lemma}
\begin{proof}
The formal derivative  of equation \eqref{2-7} with respect to $\vartheta_0 $   gives  the
equation   for    derivative $\dot m\left(\vartheta_0 ,t\right)$,
\begin{align}
{\rm d}\dot m\left(\vartheta_0 ,t\right)&=-q_\varepsilon \left(\vartheta_0 ,t\right)\dot m\left(\vartheta_0
  ,t\right){\rm d}t+ \frac{\psi_\varepsilon
    }{\varepsilon }\dot A\left(\vartheta_0
  ,t\right){\rm d}X_t \nonumber\\
 & \quad-\left[\dot a(\vartheta_0,t)+ \frac{\psi_\varepsilon
    }{\varepsilon } F\left(\vartheta_0 ,t\right)\right] m\left(\vartheta_0
  ,t\right){\rm d}t ,\quad \dot m\left(\vartheta_0 ,0\right)=0,
\label{2-15}
\end{align}
where   we denoted
\begin{align*}
 F\left(\vartheta_0 ,t\right)&= \dot A\left(\vartheta_0
  ,t\right)f\left(\vartheta_0 ,t\right)+A\left(\vartheta_0
  ,t\right)\dot f\left(\vartheta_0 ,t\right)  .
\end{align*}
To verify that $m\left(\vartheta_0 ,t\right)$ has derivative in the mean square, we
can write the equation for $m\left(\vartheta_0 +h,t\right)$ and then consider the
equation for the
difference $m\left(\vartheta_0 +h,t\right)-m\left(\vartheta_0,t\right)-h\dot
m\left(\vartheta_0 ,t\right)$. Using Gronwall-Bellman lemma we obtain the
estimate
 \begin{align*}
\Ex_{\vartheta_0}\left[m\left(\vartheta_0 +h,t\right)-m\left(\vartheta_0,t\right)-h\dot
m\left(\vartheta_0 ,t\right) \right]^2=o\left(h^2\right).
\end{align*}
Due to the {\it Innovation Theorem}, the observation process
can be written as
\begin{align*}
{\rm d}X_t=f\left(\vartheta_0,t\right)m\left(\vartheta_0,t\right){\rm
  d}t+\varepsilon \sigma \left(t\right) {\rm d}\bar W_t,\qquad 0\leq t\leq T.
\end{align*}
 Substitution of
this differential in \eqref{2-15} gives  the equation
\begin{align*}
{\rm d}\dot m\left(\vartheta_0 ,t\right)&=-q_\varepsilon \left(\vartheta_0
,t\right)\dot m\left(\vartheta_0
  ,t\right){\rm d}t- \frac{\psi_\varepsilon
    }{\varepsilon }   A\left(\vartheta_0 ,t\right)\dot  f\left(\vartheta_0
  ,t\right)m\left(\vartheta_0
  ,t\right){\rm d}t \\
 &\quad  -\dot a(\vartheta_0,t) m\left(\vartheta_0
  ,t\right){\rm d}t +\psi_\varepsilon
    \dot A\left(\vartheta_0
  ,t\right)\sigma \left(t\right){\rm d}\bar W_t,\quad \dot m\left(\vartheta_0 ,0\right)=0.
\end{align*}
Hence we have
\begin{align}
\label{ku}
\dot m\left(\vartheta_0
,t\right)&=\psi _\varepsilon\int_{0}^{t}e^{-\int_{s}^{t}q_\varepsilon \left(\vartheta_0
  ,v\right){\rm d}v} \dot A\left(\vartheta_0 ,s\right)\sigma \left(s\right) {\rm
  d}\bar W_s\nonumber\\
&\qquad -\frac{\psi _\varepsilon}{\varepsilon }\int_{0}^{t}e^{-\int_{s}^{t}q_\varepsilon \left(\vartheta_0
  ,v\right){\rm d}v} A\left(\vartheta_0 ,s\right)\dot  f\left(\vartheta_0
    ,s\right)m\left(\vartheta_0
  ,s\right){\rm d}s\nonumber\\
&\qquad -\int_{0}^{t}e^{-\int_{s}^{t}q_\varepsilon \left(\vartheta_0
  ,v\right){\rm d}v} \dot a(\vartheta_0,s) m\left(\vartheta_0
  ,s\right){\rm d}s.
\end{align}
The main contribution in these integrals is due to the values at
the vicinity of the point $t$. Therefore the  functions $A\left(\vartheta _0,s\right),
f\left(\vartheta _0,s\right),\sigma \left(s\right)$ in these integrals we can
replaced by the values $A\left(\vartheta _0,t\right), f\left(\vartheta
_0,t\right),\sigma \left(t\right)$. Here we use the Taylor expansion such as
$A\left(\vartheta _0,s\right)=A\left(\vartheta
_0,t\right)+\left(s-t\right)A'\left(\vartheta _0,\tilde s\right) $.

By Lemma 1 in \cite{Kut19a}, these integrals  satisfy
\begin{align*}
&\frac{\psi _\varepsilon}{\varepsilon }\int_{0}^{t}e^{-\int_{s}^{t}q_\varepsilon \left(\vartheta_0
  ,v\right){\rm d}v}  A\left(\vartheta_0 ,s\right)\dot f\left(\vartheta_0
   ,s\right)m\left(\vartheta_0
  ,s\right){\rm d}s\\
&\qquad =\frac{\psi _\varepsilon}{\varepsilon }\int_{0}^{t}e^{-\int_{s}^{t}q_\varepsilon \left(\vartheta_0
  ,v\right){\rm d}v} A\left(\vartheta_0 ,s\right)\dot   f\left(\vartheta_0
    ,s\right) \left[ m\left(\vartheta_0
  ,s\right)- m\left(\vartheta_0
  ,t\right)\right]{\rm d}s\\
&\qquad \qquad +\frac{\psi _\varepsilon}{\varepsilon }\int_{0}^{t}e^{-\int_{s}^{t}q_\varepsilon \left(\vartheta_0
  ,v\right){\rm d}v}  A\left(\vartheta_0 ,s\right)\dot f\left(\vartheta_0
    ,s\right){\rm d}s\;m\left(\vartheta_0
  ,t\right)\\
&\qquad =I_\varepsilon +     \frac{\dot  f\left(\vartheta_0
    ,t\right)m\left(\vartheta_0
  ,t\right) }{f\left(\vartheta_0
    ,t\right)}+O_p\left(\frac{\varepsilon }{\psi_\varepsilon}\right).
\end{align*}
where  $I_\varepsilon $ denotes the integral with $
m_{t,s} =m\left(\vartheta_0 ,t\right)- m\left(\vartheta_0 ,s\right) $.

Similarly
\begin{align*}
&\int_{0}^{t}e^{-\int_{s}^{t}q_\varepsilon \left(\vartheta_0
  ,v\right){\rm d}v} \dot a(\vartheta_0,s) m\left(\vartheta_0
  ,s\right){\rm d}s       \\
&\quad =  \int_{0}^{t}e^{-\int_{s}^{t}q_\varepsilon \left(\vartheta_0
  ,v\right){\rm d}v} \dot a(\vartheta_0,s) \left[m\left(\vartheta_0
  ,s\right)- m\left(\vartheta_0
  ,t\right)\right]{\rm d}s+   O_p\left(\frac{\varepsilon }{\psi_\varepsilon}\right).
\end{align*}

Changing integration  variable to  $s=t-\frac{r\varepsilon }{\psi _\varepsilon}$ gives
\begin{align}
\label{2-16}
m_{t,s}&=-\int_{s}^{t}a\left(\vartheta _0,v\right)m\left(\vartheta
_0,v\right){\rm d}v+\psi _\varepsilon
\int_{s}^{t}A\left(\vartheta _0,v\right)\sigma \left(v\right){\rm d}\bar W_v\nonumber\\
&=-\frac{r\varepsilon }{\psi _\varepsilon}a\left(\vartheta _0,t\right)m\left(\vartheta
_0,t\right)\left(1+o_p\left(1\right)\right)\nonumber\\
&\qquad \qquad +\psi _\varepsilon
A\left(\vartheta _0,t\right)\sigma \left(t\right) \left[\bar W_t-\bar
  W_{t-\frac{r\varepsilon }{\psi_\varepsilon}}\right]\left(1+o\left(1\right)\right) .
\end{align}
Changing the integration  variables and letting $
k\left(t\right)=A\left(\vartheta _0,t\right) f\left(\vartheta _0,t\right) $ we also get
\begin{align*}
&\frac{\psi _\varepsilon}{\varepsilon }\int_{0}^{t}e^{-\int_{s}^{t}q_\varepsilon \left(\vartheta_0
  ,v\right){\rm d}v} A\left(\vartheta_0 ,s\right)\dot   f\left(\vartheta_0
    ,s\right) \left[ m\left(\vartheta_0
  ,s\right)- m\left(\vartheta_0
  ,t\right)\right]{\rm d}s\\
&\qquad =A\left(\vartheta_0
  ,t\right)\dot   f\left(\vartheta_0
    ,t\right)\int_{0}^{\frac{t\psi _\varepsilon}{\varepsilon }}e^{- r k\left(t\right) }
  \left[- \frac{r\varepsilon }{\psi _\varepsilon}
    a\left(\vartheta _0,t\right) m\left(\vartheta_0
  ,t\right)\right.\\
&\qquad \qquad\qquad  \left.+\sqrt{\varepsilon \psi _\varepsilon}A\left(\vartheta _0,t\right)\sigma
    \left(t\right)w_\varepsilon \left(r\right)  \right]      {\rm
    d}r\left(1+o\left(1\right)\right)\\
&\qquad =A\left(\vartheta_0
  ,t\right)^2\dot   f\left(\vartheta_0
    ,t\right)\sigma \left(t\right)  \sqrt{\varepsilon \psi_\varepsilon}   \int_{0}^{\frac{t\psi _\varepsilon}{\varepsilon
  }}e^{- r k\left(t\right) } w_\varepsilon \left(r\right)      {\rm
    d}r\left(1+o\left(1\right)\right)  ,
\end{align*}
where $w_\varepsilon \left(r\right)= \sqrt{\frac{\varepsilon }{\psi
    _\varepsilon}} \left[\bar W_t-\bar W_{t-\frac{r\varepsilon }{\psi
          _\varepsilon}}\right]    $ is a Wiener process.
Further,
\begin{align*}
\int_{0}^{\frac{t\psi _\varepsilon}{\varepsilon }}e^{- r
  k\left(t\right) } w_\varepsilon \left(r\right) {\rm
  d}r&=\frac{1}{k\left(t\right)}\int_{0}^{\frac{t\psi \left(\varepsilon
    \right)}{\varepsilon }
}\left[e^{-k\left(t\right)z}-e^{-tk\left(t\right)\frac{\psi \left(\varepsilon
      \right)}{\varepsilon }} \right] {\rm d} w_\varepsilon
\left(z\right)\\
&= \frac{1}{k\left(t\right)^{3/2}}
\int_{0}^{\frac{tk\left(t\right)\psi _\varepsilon}{\varepsilon }
}e^{-y} \;{\rm d} \tilde w_\varepsilon
\left(y\right)\;\left(1+o_p\left(1\right)\right)\\
 &=\frac{1}{A\left(\vartheta _0,t\right)^{3/2}f\left(\vartheta_0
    ,t\right)^{3/2}\sqrt{2}} \;\xi _{t,\varepsilon
}\left(1+o_p\left(1\right)\right)
\end{align*}
Here $\tilde w_\varepsilon
\left(y\right)=k\left(t\right)^{1/2} w_\varepsilon
\left(y/k\left(t\right)\right) $ and
\begin{align*}
\xi _{t,\varepsilon }=\sqrt{2}\int_{0}^{\frac{tk\left(t\right)\psi
    _\varepsilon}{\varepsilon }
}e^{-y} \;{\rm d} \tilde w_\varepsilon
\left(y\right)\Longrightarrow \xi _t\;\sim \;{\cal N}\left(0,1\right),
\end{align*}
where $\xi _t,t\in(0,T]$ are mutually independent random variables.

For the stochastic integral similar argument implies
\begin{align*}
&\sqrt{\frac{\psi _\varepsilon}{\varepsilon
  }}\int_{0}^{t}e^{-\int_{s}^{t}q_\varepsilon \left(\vartheta_0
  ,v\right){\rm d}v} \dot A\left(\vartheta_0 ,s\right)\sigma \left(s\right) {\rm
  d}\bar W_s\\
&\qquad \qquad =\dot A\left(\vartheta_0 ,t\right)\sigma \left(t\right)
  \sqrt{\frac{\psi _\varepsilon}{\varepsilon }}\int_{0}^{t}
  e^{-\frac{\psi _\varepsilon}{\varepsilon }k\left(t\right)
    \left(t-s\right)}{\rm d}\bar W_s\left(1+o_p\left(1\right)\right)\\
&\qquad \qquad =\frac{\dot A\left(\vartheta_0 ,t\right)\sigma
    \left(t\right)}{\sqrt{k\left(t\right)}}
  \int_{0}^{\frac{tk\left(t\right)\psi _\varepsilon}{\varepsilon }}
  e^{-y}\;{\rm d}\tilde w_\varepsilon \left(y\right)\left(1+o_p\left(1\right)\right)\\
&\qquad \qquad =\frac{\dot A\left(\vartheta_0 ,t\right)\sigma
    \left(t\right)}{\sqrt{2A_0\left(\vartheta_0 ,t\right)f\left(\vartheta_0
    ,t\right)}} \;  \xi _{t,\varepsilon }\,\left(1+o_p\left(1\right)\right).
\end{align*}

Define the limit  functions, $\varepsilon \rightarrow 0$,
\begin{align*}
A_0\left(\vartheta_0 ,t\right)&=\frac{\gamma _0\left(\vartheta_0
  ,t\right)f\left(\vartheta_0 ,t\right)}{\sigma
  \left(t\right)^2}=\frac{b\left(\vartheta_0 ,t\right)}{\sigma
  \left(t\right)},\\
 \dot
A_0\left(\vartheta_0 ,t\right)&=\frac{\dot \gamma _0\left(\vartheta_0
  ,t\right)f\left(\vartheta_0 ,t\right)+\gamma _0\left(\vartheta_0
  ,t\right)\dot f\left(\vartheta_0 ,t\right)}{\sigma
  \left(t\right)^2}=\frac{\dot b\left(\vartheta_0 ,t\right)}{\sigma
  \left(t\right)}.
\end{align*}
Then
\begin{align*}
&\frac{A_0\left(\vartheta_0 ,t\right)^2\dot f\left(\vartheta _0,t\right)\sigma
  \left(t\right)}{A_0\left(\vartheta_0 ,t\right)^{3/2}f\left(\vartheta
  _0,t\right)^{3/2} }+\frac{\dot A_0\left(\vartheta_0 ,t\right)\sigma
  \left(t\right)}{A_0\left(\vartheta_0 ,t\right)f\left(\vartheta_0 ,t\right)}\\
&\qquad \qquad \qquad =\sqrt{\frac{b\left(\vartheta_0 ,t\right)\sigma \left(t\right)
  }{f\left(\vartheta_0 ,t\right)}}\left( \frac{\dot f\left(\vartheta
  _0,t\right)}{ f\left(\vartheta _0,t\right)}+ \frac{\dot b\left(\vartheta
  _0,t\right)}{ b\left(\vartheta _0,t\right)}\right).
\end{align*}

All this allows us to write the expansion for the derivative
\begin{align*}
\dot m\left(\vartheta _0,t\right)&=-\frac{\dot f\left(\vartheta
  _0,t\right)m\left(\vartheta _0,t\right)}{f\left(\vartheta _0,t\right)}
+O\left(\frac{\varepsilon }{\psi _\varepsilon}\right)\\
&\quad  +\sqrt{\varepsilon \psi_\varepsilon}\sqrt{\frac{b\left(\vartheta_0 ,t\right)\sigma \left(t\right)
  }{2f\left(\vartheta_0 ,t\right)}}\left( \frac{\dot f\left(\vartheta
  _0,t\right)}{ f\left(\vartheta _0,t\right)}+ \frac{\dot b\left(\vartheta
  _0,t\right)}{ b\left(\vartheta _0,t\right)}\right)\xi _{t,\varepsilon
}\left(1+o\left(1\right)\right).
\end{align*}
Now \eqref{2-14} follows from this representation and \eqref{2-6}.

\end{proof}

Let us now return to the normalized likelihood ratio $Z_\varepsilon
\left(u\right)$.
\begin{lemma}
\label{L4} Let the conditions ${\cal A}$ and ${\cal B}_1$ be satisfied, then the
family of measures $\left\{\Pb_{\vartheta }^{\left(\varepsilon
  \right)},\vartheta \in \Theta \right\}$ is locally asymptotically normal in
$\Theta $, i.e., the normalized likelihood ratio $Z_\varepsilon \left(u\right)
$ for all $\vartheta _0\in\Theta $  admits the representation
\begin{align*}
Z_\varepsilon \left(u\right)=\exp\left\{u\Delta _\varepsilon \left(\vartheta
_0,X^T\right)-\frac{u^2}{2}{\rm I}\left(\vartheta _0\right)+r_\varepsilon  \right\},
\end{align*}
where
 $$
\Delta _\varepsilon \left(\vartheta_0,X^T\right)=  \int_{0}^{T}\frac{\dot
  S\left(\vartheta_0 ,t\right)\,\xi _{t,\varepsilon }}{\sqrt{2S\left(\vartheta_0 ,t\right)\sigma
    \left(t\right)}}  {\rm d}\bar W_\varepsilon \left(t\right) \Longrightarrow
    \Delta  \left(\vartheta_0\right)\sim   {\cal N}\left(0,{\rm I}\left(\vartheta _0\right)\right).
 $$
 Here $r_\varepsilon \rightarrow 0$ and $\xi _{t,\varepsilon }\Rightarrow \xi
 _t$ for the random variables $ \xi _t,t\in (0,T]$.
\end{lemma}
\begin{proof}
The process $Z_\varepsilon \left(u\right)$ admits of the representation
\begin{align*}
\ln Z_\varepsilon \left(u\right)&=\frac{u\varphi _\varepsilon }{\varepsilon }\int_{0}^{T}\dot
M\left(\vartheta _0,t\right){\rm d}\bar W_t  -\frac{u^2\varphi _\varepsilon
  ^2}{2\varepsilon ^2}\int_{0}^{T}\dot
M\left(\vartheta _0,t\right)^2{\rm d}t+o_p\left(1\right)\\
&  =u\int_{0}^{T} \frac{\dot
  S\left(\vartheta _0,t\right)\xi _{t,\varepsilon }}{\sqrt{2S\left(\vartheta_0
  ,t\right)\sigma \left(t\right)}}{\rm
  d}\bar W_t-\frac{u^2}{2}\int_{0}^{T} \frac{\dot
  S\left(\vartheta _0,t\right)^2\xi _{t,\varepsilon }^2}{2S\left(\vartheta_0
  ,t\right)\sigma \left(t\right)}{\rm
  d}t +o_p\left(1\right) .
\end{align*}

Let us denote
\begin{align*}
N_\varepsilon =\sqrt{\frac{\psi _\varepsilon}{\varepsilon
}}\int_{0}^{T}R\left(t\right)\left(\xi_{t,\varepsilon }^2-1\right){\rm d}t
\end{align*}
where $R\left(\cdot \right) $ is some bounded function. As in the proof of
Lemma 4 in \cite{Kut19a},  for any $n>0$,
\begin{align}
\label{2-17}
 \Ex_{\vartheta _0}\left|N_\varepsilon \right|^{2n}\leq C.
\end{align}
Therefore we can write
\begin{align*}
\Ex_{\vartheta _0} \left(\int_{0}^{T} \frac{\dot
  S\left(\vartheta _0,t\right)^2\xi _{t,\varepsilon }^2}{2S\left(\vartheta_0
  ,t\right)\sigma \left(t\right)}\;{\rm
  d}t-\int_{0}^{T} \frac{\dot
  S\left(\vartheta _0,t\right)^2}{2S\left(\vartheta_0
  ,t\right)\sigma \left(t\right)}\;{\rm
  d}t\right)^2\leq C\,\frac{\varepsilon }{\psi _\varepsilon}
\end{align*}
and obtain  the convergence  in probability
\begin{align*}
\int_{0}^{T} \frac{\dot
  S\left(\vartheta _0,t\right)^2\xi _{t,\varepsilon }^2}{2S\left(\vartheta_0
  ,t\right)\sigma \left(t\right)}{\rm
  d}t\left(1+o\left(1\right)\right)\longrightarrow \int_{0}^{T} \frac{\dot
  S\left(\vartheta _0,t\right)^2}{2S\left(\vartheta_0 ,t\right)\sigma
  \left(t\right)}\;{\rm d}t={\rm I}\left(\vartheta _0\right).
\end{align*}
This limit in probability allows us to apply the central limit theorem for the
stochastic integral and to obtain the convergence
\begin{align*}
\int_{0}^{T} \frac{\dot
  S\left(\vartheta _0,t\right)\xi _{t,\varepsilon }}{\sqrt{2S\left(\vartheta_0
  ,t\right)\sigma \left(t\right)}}\;{\rm
  d}\bar W_t\Longrightarrow {\cal N}\left(0,{\rm I}\left(\vartheta _0\right)
\right).
\end{align*}
\end{proof}
Let us denote
\begin{align*}
G_\varepsilon \left(\vartheta ,\vartheta _0\right)=- \frac{\varepsilon}{ \psi
  _\varepsilon} \ln \frac{L\left(\vartheta ,X^T\right)}{L\left(\vartheta_0
  ,X^T\right)}.
\end{align*}

\begin{lemma}
\label{L5}
Assume that the conditions ${\cal A}_1,{\cal A}_2,{\cal A}_4$ are satisfied, then
\begin{align*}
G_\varepsilon \left(\vartheta ,\vartheta _0\right)=
\int_{0}^{T}\frac{\left[S\left(\vartheta
    ,t\right)-S\left(\vartheta_0
    ,t\right)\right]^2}{4S\left(\vartheta
    ,t\right)\sigma \left(t\right) }\xi _{t,\varepsilon }^2 \,{\rm
  d}t\left(1+o\left(1\right)\right)+O_p\left(\frac{\varepsilon }{\psi _\varepsilon}\right).
\end{align*}
\end{lemma}
\begin{proof}
We can write
\begin{align*}
G_\varepsilon \left(\vartheta ,\vartheta
_0\right)=-\int_{0}^{T}\frac{M\left(\vartheta ,t\right)-M\left(\vartheta
  _0,t\right)}{ \psi _\varepsilon\sigma \left(t\right)}{\rm d}\bar
W_t+\int_{0}^{T}\frac{\left[M\left(\vartheta ,t\right)-M\left(\vartheta
  _0,t\right)\right]^2}{2\varepsilon \psi_\varepsilon\sigma\left(t\right) ^2}{\rm d}t,
\end{align*}
where
\begin{align*}
M\left(\vartheta ,t\right)-M\left(\vartheta _0,t\right)&=f\left(\vartheta
,t\right)\left[m\left(\vartheta ,t\right)-m\left(\vartheta
  _0,t\right)\right]\\
&\qquad +\left[f\left(\vartheta,t\right)- f\left(\vartheta _0
  ,t\right)\right]m\left(\vartheta _0,t\right).
\end{align*}
To study the difference $m_t\left(\vartheta ,\vartheta
_0\right)=m\left(\vartheta ,t\right)-m\left(\vartheta_0 ,t\right)$, subtract
the equations
\begin{align*}
{\rm d}m\left(\vartheta ,t\right)&=-q_\varepsilon \left(\vartheta,t
\right)m\left(\vartheta ,t\right){\rm d}t +\psi_\varepsilon A\left(\vartheta
,t\right) {\rm d}\bar W_t,\\ {\rm d}m\left(\vartheta_0
,t\right)&=-a\left(\vartheta_0,t \right)m\left(\vartheta_0 ,t\right){\rm d}t
+\psi_\varepsilon A\left(\vartheta_0 ,t\right)\sigma \left(t\right){\rm d}\bar
W_t
\end{align*}
to obtain
\begin{align*}
&{\rm d}m_t\left(\vartheta ,\vartheta _0\right)=-q_\varepsilon
  \left(\vartheta,t \right)m_t\left(\vartheta ,\vartheta _0\right){\rm
    d}t+\psi_\varepsilon\left[ A\left(\vartheta
    ,t\right)-A\left(\vartheta_0 ,t\right)\right]\sigma \left(t\right){\rm
    d}\bar W_t\\
&\qquad -a\left(\vartheta ,t\right)m\left(\vartheta
  _0,t\right){\rm d}t-\frac{\psi _\varepsilon}{\varepsilon } A\left(\vartheta
  ,t\right)\left[f\left(\vartheta ,t\right)-f\left(\vartheta
    _0,t\right)\right]m\left(\vartheta _0,t\right) {\rm d}t.
\end{align*}
The solution of this equation is
\begin{align*}
m_t\left(\vartheta ,\vartheta _0\right)&=\psi_\varepsilon\int_{0}^{t}
e^{-\int_{s}^{t}q_\varepsilon \left(\vartheta ,v\right){\rm d}v}\left[
  A\left(\vartheta ,s\right)-A\left(\vartheta_0 ,s\right)\right]\sigma
\left(s\right){\rm d}\bar W_s\\ &\quad -\frac{\psi_\varepsilon}{\varepsilon
}\int_{0}^{t} e^{-\int_{s}^{t}q_\varepsilon \left(\vartheta ,v\right){\rm d}v}
A\left(\vartheta ,s\right)\left[f\left(\vartheta ,s\right)-f\left(\vartheta
  _0,s\right)\right]m\left(\vartheta _0,s\right){\rm d}s \\ &\quad
-\int_{0}^{t} e^{-\int_{s}^{t}q_\varepsilon \left(\vartheta ,v\right){\rm
    d}v}a\left(\vartheta ,s\right)m\left(\vartheta _0,s\right){\rm d}s.
   \end{align*}
By the same arguments as above,  the following expansions of the
integrals holds,
\begin{align*}
&\psi_\varepsilon\int_{0}^{t} e^{-\int_{s}^{t}q_\varepsilon
  \left(\vartheta ,v\right){\rm d}v}\left[ A\left(\vartheta ,s\right)-A\left(\vartheta_0
  ,s\right)\right]\sigma \left(s\right){\rm d}\bar W_s\\
&\qquad =\sqrt{\varepsilon \psi _\varepsilon}  \frac{\left[
    A\left(\vartheta ,t\right)-A\left(\vartheta_0
  ,t\right)\right]\sigma \left(t\right)}{\sqrt{2A\left(\vartheta
    ,t\right)f\left(\vartheta ,t\right)} }\xi _{t,\varepsilon }
\left(1+o\left(1\right)\right),\\
&\int_{0}^{t} e^{-\int_{s}^{t}q_\varepsilon
  \left(\vartheta ,v\right){\rm d}v}a\left(\vartheta ,s\right)m\left(\vartheta
_0,s\right){\rm d}s\\
&\qquad =\frac{\varepsilon }{\psi _\varepsilon}\frac{a\left(\vartheta ,t\right)m\left(\vartheta
_0,t\right)}{  A\left(\vartheta
    ,t\right)f\left(\vartheta ,t\right)  }\left(1+o_p\left(1\right)\right),\\
&\frac{\psi_\varepsilon}{\varepsilon }\int_{0}^{t} e^{-\int_{s}^{t}q_\varepsilon
  \left(\vartheta ,v\right){\rm d}v} A\left(\vartheta
,s\right)\left[f\left(\vartheta ,s\right)-f\left(\vartheta
  _0,s\right)\right]m\left(\vartheta _0,s\right){\rm d}s\\
&\qquad =\frac{A\left(\vartheta
,t\right)\left[f\left(\vartheta ,t\right)-f\left(\vartheta
  _0,t\right)\right]m\left(\vartheta _0,t\right)}{A\left(\vartheta
,t\right)f\left(\vartheta ,t\right)}+O_p\left(\frac{\varepsilon }{\psi
  _\varepsilon}\right)\\
&\qquad \qquad +\frac{\psi_\varepsilon}{\varepsilon }H_t\left(\vartheta ,\vartheta
_0\right)\int_{0}^{t} e^{-\int_{s}^{t}q_\varepsilon
  \left(\vartheta ,v\right){\rm d}v}\left[m\left(\vartheta _0,s\right)-m\left(\vartheta _0,t\right)
  \right]{\rm d}s .
\end{align*}
Here we denoted $H_t\left(\vartheta ,\vartheta _0\right)=A\left(\vartheta
,t\right)\left[f\left(\vartheta ,t\right)-f\left(\vartheta
  _0,t\right)\right]$. The difference $m\left(\vartheta
_0,s\right)-m\left(\vartheta _0,t\right) $ was already evaluated in
\eqref{2-16}. Therefore we can write
\begin{align*}
m_t\left(\vartheta ,\vartheta _0\right)&= \sqrt{\varepsilon \psi _\varepsilon}  \frac{\left[
    A\left(\vartheta ,t\right)-A\left(\vartheta_0
  ,t\right)\right]\sigma \left(t\right)}{\sqrt{2A\left(\vartheta
    ,t\right)f\left(\vartheta ,t\right)} }\xi _{t,\varepsilon }
\left(1+o_p\left(1\right)\right)\\
&\; -\frac{A\left(\vartheta
,t\right)\left[f\left(\vartheta ,t\right)-f\left(\vartheta
  _0,t\right)\right]m\left(\vartheta _0,t\right)}{A\left(\vartheta
,t\right)f\left(\vartheta ,t\right)}+O_p\left(\frac{\varepsilon }{\psi
  _\varepsilon}\right)\\
&\;  +\sqrt{\varepsilon \psi _\varepsilon} \frac{A\left(\vartheta
,t\right)\left[f\left(\vartheta ,t\right)-f\left(\vartheta
  _0,t\right)\right]A\left(\vartheta_0
  ,t\right) \sigma \left(t\right)}{\sqrt{2} A\left(\vartheta
,t\right)^{3/2}f\left(\vartheta ,t\right)^{3/2}  }\xi _{t,\varepsilon }
\left(1+o_p\left(1\right)\right)\\
&=\frac{\left[f\left(\vartheta _0 ,t\right)-f\left(\vartheta
 ,t\right)\right]m\left(\vartheta _0,t\right)}{f\left(\vartheta
  ,t\right)}+O_p\left(\frac{\varepsilon }{\psi
  _\varepsilon}\right)\\
&\; +\sqrt{\varepsilon \psi _\varepsilon}\frac{\left[A\left(\vartheta
,t\right)f\left(\vartheta ,t\right)-A\left(\vartheta_0
,t\right)f\left(\vartheta_0 ,t\right)\right]\sigma \left(t\right) }{f\left(\vartheta ,t\right)\sqrt{2A\left(\vartheta
,t\right)f\left(\vartheta ,t\right) }}\xi _{t,\varepsilon }
\left(1+o_p\left(1\right)\right).
\end{align*}
Hence
\begin{align*}
&M\left(\vartheta ,t\right)-M\left(\vartheta_0 ,t\right)=O_p\left(\frac{\varepsilon }{\psi
  _\varepsilon}\right)\\
&\qquad +\sqrt{\varepsilon \psi
  _\varepsilon}\frac{\left[b\left(\vartheta
    ,t\right)f\left(\vartheta ,t\right)-b\left(\vartheta_0
    ,t\right)f\left(\vartheta_0 ,t\right)\right]\sqrt{\sigma \left(t\right)}
}{\sqrt{2b\left(\vartheta ,t\right)f\left(\vartheta
    ,t\right) }}\xi _{t,\varepsilon } \left(1+o_p\left(1\right)\right).
\end{align*}
For the stochastic integral we obtain the relation
\begin{align*}
&\int_{0}^{T}\frac{M\left(\vartheta ,t\right)-M\left(\vartheta_0
  ,t\right)}{\psi _\varepsilon \sigma \left(t\right)} {\rm d}\bar
W_t\\
&\qquad =\sqrt{\frac{\varepsilon }{\psi _\varepsilon}}
\int_{0}^{T}\frac{S\left(\vartheta ,t\right)-S\left(\vartheta
  _0,t\right)}{\sqrt{2S\left(\vartheta ,t\right)\sigma \left(t\right)} }
 \xi _{t,\varepsilon }{\rm d}\bar
W_t\left(1+o_p\left(1\right)\right)\longrightarrow 0.
\end{align*}
For the  ordinary integral this gives us the limit
\begin{align}
\label{2-18}
&\int_{0}^{T}\frac{\left[M\left(\vartheta ,t\right)-M\left(\vartheta_0
  ,t\right)\right]^2}{2\varepsilon \psi _\varepsilon \sigma
    \left(t\right)^2}{\rm d}t\nonumber\\
&\qquad \qquad =\int_{0}^{T}\frac{\left[S\left(\vartheta ,t\right)-S\left(\vartheta
  _0,t\right)\right]^2}{{4S\left(\vartheta ,t\right)\sigma \left(t\right)} }
  \xi _{t,\varepsilon }^2{\rm d}t \left(1+o\left(1\right)\right)+O_p\left(\frac{\varepsilon }{\psi
  _\varepsilon}\right)\nonumber\\
&\qquad \qquad \longrightarrow \int_{0}^{T}\frac{\left[S\left(\vartheta ,t\right)-S\left(\vartheta
  _0,t\right)\right]^2}{{4S\left(\vartheta ,t\right)\sigma \left(t\right)} }
  {\rm d}t=G\left(\vartheta ,\vartheta _0\right).
\end{align}

\end{proof}

\begin{lemma}
\label{L6} Assume that conditions ${\cal A},{\cal B}$ are satisfied, then
for some  constant $\kappa >0$ and any $N>0$  there exists a constant $C_N >0$ such that
\begin{align}
\label{2-19}
\Pb_{\vartheta _0}\left\{Z_\varepsilon \left(u\right)\geq e^{-\kappa
  u^2}\right\}\leq \frac{C_N}{\left|u\right|^N} .
\end{align}
\end{lemma}
\begin{proof}
Take a sufficiently small $\delta  $ such that for $\left|h\right|\leq \delta $
\begin{align*}
G\left(\vartheta _0+h,\vartheta _0\right)=\frac12 h^2{\rm I}\left(\vartheta
_0\right)\left(1+o\left(1\right)\right)\geq  \frac{{\rm I}\left(\vartheta
_0\right)}{4}\;h^2.
\end{align*}
Let us denote $g\left(\vartheta _0,\delta \right)=\inf_{\left|\vartheta -\vartheta _0\right|>\delta
}G\left(\vartheta ,\vartheta _0\right) $ and recall that by condition ${\cal
  B}_2$ $g\left(\vartheta _0,\delta \right)>0 $. Then for
$\left|h\right|>\delta $ we have
\begin{align*}
G\left(\vartheta _0+h,\vartheta _0\right)\geq g\left(\vartheta _0,\delta \right)\geq
 \frac{g\left(\vartheta _0,\delta \right)}{\left(\beta -\alpha \right)^2}\;h^2.
\end{align*}
Combining these two estimates we obtain
\begin{align}
G\left(\vartheta _0+h,\vartheta _0\right)\geq \kappa _* \;h^2,\qquad
\kappa_*=\left(\frac{g\left(\vartheta _0,\delta \right)}{\left(\beta -\alpha \right)^2} \wedge
  \frac{{\rm I}\left(\vartheta _0\right)}{4}\right).
\label{2-20}
\end{align}
Denote $\Delta M_t=M\left(\vartheta_u ,t \right)-M\left(\vartheta
_0 ,t \right)$ and write
\begin{align}
&\Pb_{\vartheta _0}\left\{\ln Z_\varepsilon \left(u\right)\geq {-\kappa
    u^2}\right\}\nonumber\\
&\qquad =\Pb_{\vartheta _0}\left\{\int_{0}^{T}\frac{\Delta
    M_t}{2\varepsilon \sigma \left(t\right)}{\rm d}\bar W_t-\int_{0}^{T}\frac{\Delta
    M_t^2}{4\varepsilon^2 \sigma \left(t\right)^2}{\rm d}t\geq {-\frac{\kappa
    }{2}    u^2}\right\} \nonumber\\
&\qquad \leq \Pb_{\vartheta _0}\left\{\int_{0}^{T}\frac{\Delta
    M_t}{2\varepsilon \sigma \left(t\right)}{\rm d}\bar W_t-\int_{0}^{T}\frac{\Delta
    M_t^2}{8\varepsilon^2 \sigma \left(t\right)^2}{\rm d}t\geq
  {\frac{\kappa}{2}    u^2}\right\}\nonumber\\
&\qquad \qquad \qquad \qquad +\Pb_{\vartheta _0}\left\{-\int_{0}^{T}\frac{\Delta
    M_t^2}{8\varepsilon^2 \sigma \left(t\right)^2}{\rm d}t\geq {-\kappa
        u^2}\right\}\nonumber\\
&\qquad \leq e^{-\frac{\kappa }{2}u^2} +\Pb_{\vartheta
    _0}\left\{\int_{0}^{T}\frac{\Delta
    M_t^2}{\varepsilon^2 \sigma \left(t\right)^2}{\rm d}t\leq
  {8\kappa        u^2}\right\}.
\label{2-21}
\end{align}
Here we used the equality
\begin{align*}
\Ex_{\vartheta _0}\exp\left\{ \int_{0}^{T}\frac{\Delta
    M_t}{2\varepsilon \sigma \left(t\right)}{\rm d}\bar W_t-\int_{0}^{T}\frac{\Delta
    M_t^2}{8\varepsilon^2 \sigma \left(t\right)^2}{\rm d}t  \right\}=1.
\end{align*}
To estimate the latter probability in \eqref{2-21} we consider separately two
cases $\AA=  \left(u:
\left|u\right|\leq \varphi _\varepsilon ^{1/2}\right)$ (local) and $\AA^c=\left(u:
\left|u\right|>\varphi _\varepsilon ^{1/2}\right)$ (global). Locality is with respect to the values of $u$
for which   $\left|\vartheta_u
-\vartheta _0\right|\leq \varphi _\varepsilon ^{1/2}$.

Let $u\in\AA$. The proof of \eqref{2-14} shows that this
representation is valid for the values $u\in\AA $. Moreover the residuals
$o\left(1\right)$ and $O\left(\frac{\varepsilon }{\psi \left(\varepsilon
  \right)}\right)$ have bounded polynomial moments of all orders.
Hence
\begin{align*}
&\frac{M\left(\vartheta _0+\varphi _\varepsilon u,t\right)-M\left(\vartheta
    _0,t\right) }{\varepsilon \sigma \left(t\right)} =\frac{\varphi
    _\varepsilon u}{\varepsilon } \frac{\dot M\left(\vartheta
    _0,t\right)}{\sigma
    \left(t\right)}\left(1+o_p\left(1\right)\right)\\
 &\qquad=\frac{
    u}{\sqrt{\varepsilon\psi _\varepsilon} } \frac{\dot
    M\left(\vartheta _0,t\right)}{\sigma
    \left(t\right)}\left(1+o_p\left(1\right)\right)\\
&\qquad = \frac{u \;\dot
    S\left(\vartheta_0,t\right) }{\sqrt{2S\left(\vartheta_0 ,t\right)\sigma
      \left(t\right)}} \; {}{ }\; \xi _{t,\varepsilon}
  \left(1+o_p\left(1\right)\right)+u O_p\left(\left(\frac{\varepsilon }{\psi
    _\varepsilon^3 }\right)^{1/2}\right).
\end{align*}
 Therefore we can write
\begin{align*}
&\int_{0}^{T}\frac{\Delta
    M_t^2}{\varepsilon^2 \sigma \left(t\right)^2}{\rm d}t=u^2
\int_{0}^{T}\frac{\dot S\left(\vartheta_0,t\right)^2
}{2S\left(\vartheta_0,t\right)\sigma \left(t\right) }\xi _{t,\varepsilon
}^2{\rm d}t \left(1+o_p\left(1\right)\right)+u^2 O_p\left(\frac{\varepsilon }{\psi
    _\varepsilon^3 }\right)\\
&\quad =u^2{\rm I}\left(\vartheta _0\right)+u^2
\int_{0}^{T}\frac{\dot S\left(\vartheta_0,t\right)^2
}{2S\left(\vartheta_0,t\right)\sigma \left(t\right) }\left(\xi _{t,\varepsilon
}^2-1 \right){\rm d}t \left(1+o\left(1\right)\right)+u^2 O_p\left(\frac{\varepsilon }{\psi
    _\varepsilon^3 }\right)\\
&\quad =u^2{\rm I}\left(\vartheta _0\right)+u^2 \sqrt{\frac{\varepsilon }{\psi
    _\varepsilon}} N_\varepsilon \left(1+o\left(1\right)\right)+u^2 \frac{\varepsilon }{\psi
    _\varepsilon^3 }Q_\varepsilon
\end{align*}
with the obvious notations. Further, let us denote $\hat\kappa =\inf_{\vartheta
  \in\Theta }{\rm I}\left(\vartheta \right)>0 $ and introduce the sets
\begin{align*}
\NN_\varepsilon=\left\{ \sqrt{\frac{\varepsilon }{\psi \left(\varepsilon
    \right)}} \left|N_\varepsilon \right|\left(1+o\left(1\right)\right)\leq
\frac{\hat\kappa}{4} \right\} ,\qquad \qquad \QQ_\varepsilon=\left\{
\frac{\varepsilon }{\psi_\varepsilon^3 } \left|Q_\varepsilon
\right)\leq \frac{\hat\kappa}{4} \right\}.
\end{align*}
 Then we can write
\begin{align*}
&\Pb_{\vartheta
    _0}\left\{\int_{0}^{T}\frac{\Delta
    M_t^2}{\varepsilon^2 \sigma \left(t\right)^2}{\rm d}t \leq
  {8\kappa        u^2}\right\}\\
&\qquad \qquad\leq \Pb_{\vartheta
    _0}\left\{\int_{0}^{T}\frac{\Delta
    M_t^2}{\varepsilon^2 \sigma \left(t\right)^2}{\rm d}t\leq
  {8\kappa        u^2}, \NN_\varepsilon ,\QQ_\varepsilon \right\}+\Pb_{\vartheta
    _0}\left(\NN_\varepsilon ^c \right)+\Pb_{\vartheta
    _0}\left(\QQ_\varepsilon ^c \right).
\end{align*}
If we let $\kappa
=\hat\kappa/32$ then for $\left|u\right|>0$ the first probability satisfies
\begin{align*}
 &\Pb_{\vartheta
    _0}\left\{\int_{0}^{T}\frac{\Delta
    M_t^2}{\varepsilon^2 \sigma \left(t\right)^2}{\rm d}t\leq
  {8\kappa        u^2}, \NN_\varepsilon ,\QQ_\varepsilon \right\}\\
&\qquad  \quad\qquad  \quad\leq \Pb_{\vartheta
    _0}\left\{u^2{\rm I}\left(\vartheta _0\right)-\frac{\hat\kappa}{2}u^2    \leq
  {8\kappa        u^2}, \NN_\varepsilon ,\QQ_\varepsilon \right\}\\
&\qquad  \quad\qquad  \quad \leq \Pb_{\vartheta
    _0}\left\{\frac{\hat\kappa}{2}u^2    \leq
  {8\kappa        u^2}, \NN_\varepsilon ,\QQ_\varepsilon \right\}=0.
\end{align*}
As the moments of $N_\varepsilon $ and $Q_\varepsilon $ are bounded we can
write, for any $n>0$,
\begin{align*}
\Pb_{\vartheta _0}\left\{ \NN_\varepsilon ^c\right\}&\leq C\,\left(\frac{\varepsilon }{\psi
  _\varepsilon}\right)^{\frac{n}{2}}=C\varepsilon
^{\frac{n\left(1-\delta \right)}{2}},\\
 \Pb_{\vartheta _0}\left\{ \QQ_\varepsilon^c \right\}&\leq C\,\left(\frac{\varepsilon }{\psi
  _\varepsilon^3}\right)^{n}=C\varepsilon ^{n\left(1-3\delta \right)}.
\end{align*}
Let   $\delta _*=\frac{\left(1-\delta \right)}{2} \wedge
\left(1-3\delta\right) $, then
\begin{align*}
&\Pb_{\vartheta _0}\left\{\int_{0}^{T}\frac{\Delta M_t^2}{\varepsilon^2 \sigma
    \left(t\right)^2}{\rm d}t \leq {8\kappa u^2}\right\}\leq \Pb_{\vartheta
    _0}\left\{ \NN_\varepsilon ^c\right\}+\Pb_{\vartheta _0}\left\{
  \QQ_\varepsilon ^c\right\} \leq C\varepsilon ^{n\delta _*}.
\end{align*}
Recall that
\begin{align*}
\vartheta _0+\varphi _\varepsilon u\in \left(\alpha ,\beta \right), \qquad
\left|u\right|\leq \frac{\beta -\alpha }{\varphi _\varepsilon }=\varepsilon
^{-\frac{1-\delta }{2}}\left(\beta -\alpha \right),\qquad \varepsilon <
\frac{\left(\beta -\alpha \right)^{\frac{2}{1-\delta }}}{\left|u\right|^{\frac{2}{1-\delta }}}.
\end{align*}
Hence if  for any $N>0$ we  take     $n=\frac{N\left(1-\delta \right)}{2\delta
  _*} $ then
\begin{align*}
\Pb_{\vartheta _0}\left\{\int_{0}^{T}\frac{\Delta M_t^2}{\varepsilon^2 \sigma
    \left(t\right)^2}{\rm d}t \leq {8\kappa u^2}\right\}\leq
\frac{C}{\left|u\right|^{\frac{2n\delta _*}{1-\delta }}  }\leq \frac{C}{\left|u\right|^N}.
\end{align*}
Thus we obtained the estimate \eqref{2-19} for $u\in \AA$.

Suppose that $u\in \AA^c$. The relations \eqref{2-18} and \eqref{2-20}  allow us to write
\begin{align*}
&\int_{0}^{T}\frac{\Delta M_t^2}{2\varepsilon^2 \sigma \left(t\right)^2}\;{\rm
  d}t=\frac{\psi _\varepsilon}{\varepsilon }\;\frac{1
}{\varepsilon \psi _\varepsilon}\int_{0}^{T}\frac{\Delta
  M_t^2}{2\sigma \left(t\right)^2}\;{\rm d}t\\
&\qquad =\frac{\psi_\varepsilon}{\varepsilon }\;\int_{0}^{T}\frac{\left[S\left(\vartheta
    ,t\right)-S\left(\vartheta_0 ,t\right)\right]^2}{4S\left(\vartheta
  ,t\right) \sigma \left(t\right)}\xi _{t,\varepsilon }^2\;{\rm
    d}t\left(1+o_p\left(1\right)\right)+O_p\left(1\right)\\
&\qquad =\frac{\psi_\varepsilon}{\varepsilon }\;G\left(\vartheta_u,\vartheta
  _0\right) +\sqrt{\frac{\psi_\varepsilon}{\varepsilon }}\;R_\varepsilon +O\left(1\right) \geq \kappa _*
  u^2+\sqrt{\frac{\psi_\varepsilon}{\varepsilon }}\;R_\varepsilon +O\left(1\right),
\end{align*}
where we denoted
\begin{align*}
R_\varepsilon=\sqrt{\frac{\psi_\varepsilon}{\varepsilon }}\int_{0}^{T}\frac{\left[S\left(\vartheta
    ,t\right)-S\left(\vartheta_0 ,t\right)\right]^2}{4S\left(\vartheta
  ,t\right) \sigma \left(t\right)}\left[\xi _{t,\varepsilon }^2-1\right]\;{\rm
    d}t\left(1+o_p\left(1\right)\right).
\end{align*}
Therefore if we take $\kappa =\kappa _*/16$ and recall that for $u\in \AA^c$
we have $\left|u\right|>\sqrt{\frac{\psi \left(\varepsilon
    \right)}{\varepsilon }} $ then we can write
\begin{align*}
&\Pb_{\vartheta _0}\left\{\int_{0}^{T}\frac{\Delta M_t^2}{\varepsilon^2 \sigma
    \left(t\right)^2}{\rm d}t \leq {8\kappa u^2}\right\}\\ &\qquad \qquad \leq
  \Pb_{\vartheta _0}\left\{-\sqrt{\frac{\psi \left(\varepsilon
      \right)}{\varepsilon }}\;\left|R_\varepsilon\right| +O_p\left(1\right)\leq
  -{\left(\kappa _*-8\kappa\right) u^2}\right\}\\ &\qquad \qquad \leq
  \Pb_{\vartheta _0}\left\{
  \left|u\right|\;\left|R_\varepsilon\right|+O_p\left(1\right) \geq \frac{\kappa
    _*}{2} u^2\right\}\leq \frac{C}{\left|u\right|^N}.
\end{align*}

\end{proof}

\begin{lemma}
\label{L7}
Assume that  conditions ${\cal A}$ and ${\cal B}_1$ are satisfied, then
\begin{align}
\label{2-22}
\Ex_{\vartheta _0}\left[Z_\varepsilon \left(u_1\right)^{1/2}-Z_\varepsilon
 \left(u_2\right)^{1/2} \right]^2\leq C\left|u_2-u_1\right|^2
\end{align}

\end{lemma}
\begin{proof} As usually in such situations (see, e.g., \cite{Kut94}) we write
\begin{align*}
\Ex_{\vartheta _0}\left[Z_\varepsilon \left(u_1\right)^{1/2}-Z_\varepsilon
  \left(u_2\right)^{1/2} \right]^2= 2-2 \Ex_{\vartheta
  _{u_1}}\left(\frac{Z_\varepsilon \left(u_2\right)}{Z_\varepsilon
  \left(u_1\right)}\right)^{1/2.}= 2-2 \Ex_{\vartheta _{u_1}}V_T,
\end{align*}
where we denoted $\vartheta _{u_1}=\vartheta _0+\varphi _\varepsilon u_1$, and  introduce the process
\begin{align*}
V_t=\exp\left\{\int_{0}^{t}\frac{\Delta M_s}{2\varepsilon ^2\sigma
  \left(s\right)^2}{\rm d}X_s-\int_{0}^{t}\frac{\left[M\left(\vartheta
  _{u_2},s\right)^2-M\left(\vartheta _{u_1},s\right)^2\right]}{4\varepsilon ^2\sigma
  \left(s\right)^2}{\rm d}s\right\}.
\end{align*}
Here $0\leq t\leq T $ and $\Delta M_s=M\left(\vartheta _0+\varphi _\varepsilon
{u_2},s\right)-M\left(\vartheta _0+\varphi _\varepsilon {u_1},s\right)$.  This
process with $\Pb_{\vartheta _{u_1}}$-probability 1 has stochastic differential
\begin{align*}
{\rm d}V_t=   -\frac{\left(\Delta M_t\right)^2}{8\varepsilon ^2\sigma
  \left(t\right)^2}V_t{\rm d}t+ \frac{\Delta M_t}{2\varepsilon \sigma
  \left(t\right)}V_t{\rm d}\bar W_t,\qquad V_0=1.
\end{align*}
Therefore
\begin{align*}
2-2 \Ex_{\vartheta _{u_1}}V_T&=\int_{0}^{T}\Ex_{\vartheta _{u_1}}V_t
\frac{\left(\Delta M_t\right)^2}{8\varepsilon ^2\sigma \left(t\right)^2}{\rm
  d}t\\
&\leq \frac{1}{4\varepsilon ^2}\int_{0}^{T}
\frac{\Ex_{\vartheta _{u_2}}\left(\Delta M_t\right)^2}{\sigma \left(t\right)^2}{\rm
  d}t+\frac{1}{4\varepsilon ^2}\int_{0}^{T}
\frac{\Ex_{\vartheta _{u_1}}\left(\Delta M_t\right)^2}{\sigma \left(t\right)^2}{\rm
  d}t.
\end{align*}
Here we used the inequality $V_t \left(\Delta M_t\right)^2\leq 2V_t^2
  \left(\Delta M_t\right)^2 +2 \left(\Delta M_t\right)^2$ and changed the
  measure $\Ex_{\vartheta _{u_1}}V_t^2=\Ex_{\vartheta _{u_2}} $.
Now the bound \eqref{2-22} follows from the representation \eqref{2-14}, where
$\vartheta _0$ is replaced with  $\vartheta _{u_1}$ and $\vartheta _{u_2}$ respectively.

\end{proof}

It can be shown that the convergence in Lemma \ref{L4} is uniform
on the compacts of $\Theta $ and the constants in the Lemmas  \ref{L6} and
\ref{L7} can be chosen independent on $\vartheta _0$.

The properties of the normalized likelihood ratio $Z_\varepsilon \left(\cdot
\right)$ established in Lemmas \ref{L4}, \ref{L6} and  \ref{L7}
verify the sufficient conditions $N1-N4$  of  Theorems  3.1.1 and
3.2.1 in \cite{IH81}, which, in turn, imply the properties of the MLE and BE claimed in
 Theorem \ref{T1}.

Since the  convergence  of moments is
uniform on compacts of $\Theta $,  we can prove the asymptotic efficiency
of estimators as follows. The uniform convergence of moments for the MLE imply
\begin{align*}
\lim_{\varepsilon \rightarrow 0}\sup_{\left|\vartheta -\vartheta _0\right|\leq
  \nu }\frac{\psi _\varepsilon}{ \varepsilon }\Ex_{\vartheta
}\left[\hat\vartheta _\varepsilon-\vartheta \right]^2=\sup_{\left|\vartheta
  -\vartheta _0\right|\leq \nu }{\rm I}\left(\vartheta
\right)^{-1}\xrightarrow{\nu \rightarrow 0} {\rm I}\left(\vartheta_0 \right)^{-1}.
\end{align*}
The same convergence holds for BE $\tilde \vartheta _\varepsilon $. For more
general loss functions see Theorem 3.1.3 in \cite{IH81}.

 \end{proof}

\section{Discussions}

 Suppose that $f\left(\vartheta ,t\right)=\vartheta
f\left(t\right),b\left(\vartheta ,t\right)=\vartheta ^{-1}b\left(t\right)$ and
$a\left(\vartheta ,t\right)$ depend  on $\vartheta $. Then the function
$S\left(\vartheta ,t\right)=f\left(t\right)b\left(t\right)$ does not depend on
$\vartheta $ and the conditions ${\cal B}$ fail. Now the
existence of the consistent estimator depends on the value of $y_0$. If
$y_0\not=0$ then the rate of convergence of the estimators is different. Let
us construct a consistent and asymptotically normal estimator in this
situation. Introduce the notations
\begin{align*}
H\left(\vartheta ,t\right)&=y_0\int_{0}^{t}\exp
\left(\int_{0}^{s}a\left(\vartheta ,v\right){\rm d}v\right)f\left(s\right)
     {\rm d}s ,\qquad \eta \left(t\right)=\int_{0}^{t}\sigma
     \left(s\right){\rm d}W_s,\\
\pi \left(\vartheta
     ,t\right)&=\int_{0}^{t}\int_{0}^{s}\exp\left\{\int_{r}^{s}a\left(\vartheta
     ,v\right)\,{\rm d}v\right\}b\left(r\right)\,{\rm
       d}V_r\,f\left(s\right)\,{\rm d}s .
\end{align*}
Then the observed process can be written as follows
\begin{align*}
X_t=H\left(\vartheta_0 ,t\right)+\psi _\varepsilon \pi \left(\vartheta_0
,t\right)+\varepsilon \eta\left(t\right),\qquad 0\leq t\leq T,
\end{align*}
where $\pi \left(\cdot \right)$ and $\eta \left(\cdot \right)$ are independent
Gaussian processes. The {\it minimum distance estimator } (MDE) $\vartheta
_\varepsilon ^*$ is the solution of equation
\begin{align*}
\int_{0}^{T}\left[X_t-H\left(\vartheta _\varepsilon ^*,t\right)\right]^2{\rm
  d}t=\inf_{\vartheta \in\Theta }\int_{0}^{T}\left[X_t-H\left(\vartheta
  ,t\right)\right]^2{\rm d}t.
\end{align*}
The identifiability condition is
\begin{align}
\label{3-1}
\inf_{\left|\vartheta -\vartheta _0\right|>\nu }\int_{0}^{T}\left[H\left(\vartheta
  ,t\right)-H\left(\vartheta_0
  ,t\right)\right]^2{\rm d}t>0, \quad  \forall \nu >0,
\end{align}
If this condition is satisfied then the MDE is consistent (see \cite{Kut94}). Moreover, it can be
shown that
\begin{align*}
\vartheta _\varepsilon ^*&=\vartheta _0+\psi _\varepsilon \int_{0}^{T}\pi
\left(\vartheta _0,t\right) D\left(\vartheta_0 ,t\right){\rm
  d}t\left(1+o\left(1\right)\right)\\
&\qquad  +\varepsilon \int_{0}^{T}\eta
\left(t\right) D\left(\vartheta_0 ,t\right){\rm
  d}t\left(1+o\left(1\right)\right),
\qquad  D\left(\vartheta_0 ,t\right)
=\frac{\dot H\left(\vartheta _0,t\right)}{\int_{0}^{T}\dot H\left(\vartheta _0,t\right)^2{\rm d}t}.
\end{align*}
Therefore if we denote $\phi_\varepsilon =\max(\varepsilon ,\psi _\varepsilon)
$, then for any $\varepsilon \rightarrow 0$ and $\psi _\varepsilon \rightarrow
0$, the asymptotic normality of $\vartheta _\varepsilon ^*$ holds,
\begin{align}
\label{3-2}
\frac{\vartheta _\varepsilon ^*-\vartheta _0}{\phi_\varepsilon
}\Longrightarrow {\cal N}\left(0, d\left(\vartheta _0\right)^2\right)
\end{align}
with the corresponding limit variance $d\left(\vartheta _0\right)^2$.

Let us consider the possibility of the adaptive filtration for this model of
observations. As mentioned in the Introduction, finding the MLE and the BE for
the partially observed linear system \eqref{01}, \eqref{02} is computationally
inefficient, since it requires solving the filtering equations \eqref{2-3},
\eqref{2-4} for all $\vartheta \in \Theta $. The for the MLE we have to solve
the maximization problem \eqref{04}. Instead we can use a much simpler
algorithm, based on the multi-step approach, recently developed in
\cite{Kut17},\cite{KhK18}, \cite{KZ20}. Let us consider such construction of
preliminary estimator  in
the case of observations \eqref{01} omitting the technical details. Fix a
small $\tau \in \left(0,T\right)$ and let $\hat \vartheta _{\tau ,\varepsilon }$
be the MLE ($y_0=0$)  and  $ \vartheta _{\tau ,\varepsilon }^*$ be the MDE ($y_0\not=0$)
described above based on the observations $X^\tau =\left(X_t,t\in\left[0,\tau
  \right]\right)$. Assume that the corresponding identifiability and regularity
conditions are fulfilled then the both estimators are 
consistent and asymptotically normal. Below we use notation $ \vartheta _{\tau
  ,\varepsilon }^*$ for preliminary estimator assuming that it can be the MLE
too.   Let $\psi _\varepsilon
=\varepsilon^\delta , $ $\delta \in \left(\frac{1}{5}, \frac{1}{3}\right)$ and
define the one-step MLE-process $\vartheta _{t,\varepsilon} ^\star,\tau <t\leq
T $, where 
\begin{align}
\label{3-3}
\vartheta _{t,\varepsilon} ^\star=\vartheta _{\tau ,\varepsilon }^*+{\rm I}_\tau^t
\left(\vartheta _{\tau ,\varepsilon }^*\right)^{-1}\int_{\tau }^{t}\frac{\dot
  M\left(\vartheta _{\tau ,\varepsilon }^*,s\right)}{\varepsilon \psi _\varepsilon\sigma
  \left(s\right)^2}\left[{\rm d}X_s-M\left(\vartheta _{\tau ,\varepsilon
  }^*,s\right){\rm d}s\right].
\end{align}
The Fisher information here
\begin{align*}
{\rm I}_\tau ^t\left(\vartheta\right)=\int_{\tau }^{t}\frac{\dot
  S\left(\vartheta ,s\right)^2}{2S\left(\vartheta ,s\right)\sigma
  \left(s\right)}{\rm d}s 
\end{align*}
is supposed to be positive for all $t\in (\tau ,T]$. We have to explain how
  calculate $\dot
  M\left(\vartheta _{\tau ,\varepsilon }^*,s\right) $ and $
  M\left(\vartheta _{\tau ,\varepsilon }^*,s\right)$. Recall that by
  \eqref{2-7} ($y_0=0$)  we have
\begin{align*}
m\left(\vartheta ,t\right)&=\frac{\psi _\varepsilon }{\varepsilon
}e^{-\int_{0}^{t}q_\varepsilon \left(\vartheta ,v\right){\rm
    d}v}\int_{0}^{t}e^{\int_{0}^{s}q_\varepsilon \left(\vartheta ,v\right){\rm
    d}v}A\left(\vartheta ,s\right){\rm d}X_s\\
& =h \left(\vartheta
,t\right) \int_{0}^{t}H \left(\vartheta ,s\right){\rm d}X_s 
\end{align*}
with obvious notation. The stochastic integral can be replaced using the
following relation
\begin{align*}
\int_{0}^{t}H \left(\vartheta ,s\right){\rm d}X_s=H \left(\vartheta ,t\right)
X_t-\int_{0}^{t}H'_s \left(\vartheta ,s\right)X_s{\rm d}s. 
\end{align*}
The right hand side of this equality we denote as $Z\left(\vartheta,t
,X^t\right) $ and put
\begin{align*}
m\left(\vartheta _{\tau ,\varepsilon }^* ,s\right)=h \left(\vartheta _{\tau ,\varepsilon }^*
,s\right)Z\left(\vartheta _{\tau ,\varepsilon }^*,s
,X^s\right). 
\end{align*}
For $\dot
  M\left(\vartheta _{\tau ,\varepsilon }^*,s\right) $ can be obtained the
  similar expression.

Following the usual calculations in such situation (see, e.g.,
\cite{KZ14},\cite{Kut17}) we obtain the relations
\begin{align*}
&\sqrt{\frac{\psi _\varepsilon }{\varepsilon }}\left(\vartheta _{t,\varepsilon}
^\star-\vartheta _0\right)= \sqrt{\frac{\psi _\varepsilon }{\varepsilon }}
\left(\vartheta _{\tau ,\varepsilon }^* -\vartheta _0 \right)+\frac{1}{{\rm I}_\tau^t
\left(\vartheta _{\tau ,\varepsilon }^*\right)}\int_{\tau }^{t}\frac{\dot
  M\left(\vartheta _{\tau ,\varepsilon }^*,s\right)}{\sqrt{\varepsilon \psi _\varepsilon}\sigma
  \left(s\right)}{\rm d}\bar W_s\\
&\quad\qquad  +\sqrt{\frac{\psi _\varepsilon }{\varepsilon }}\frac{1}{{\rm I}_\tau^t
\left(\vartheta _{\tau ,\varepsilon }^*\right)}
\int_{\tau }^{t}\frac{\dot
  M\left(\vartheta _{\tau ,\varepsilon }^*,s\right)}{\varepsilon \psi _\varepsilon\sigma
  \left(s\right)^2} \left[M\left(\vartheta _0,s\right)  -M\left(\vartheta _{\tau ,\varepsilon
  }^*,s\right)\right]{\rm d}s\\
&\quad = \sqrt{\frac{\psi _\varepsilon }{\varepsilon }}
\left(\vartheta _{\tau ,\varepsilon }^* -\vartheta _0 \right)+\frac{1}{{\rm I}_\tau^t
\left(\vartheta _0\right)}\int_{\tau }^{t}\frac{\dot
  S\left(\vartheta _0,s\right)}{\sqrt{2S\left(\vartheta _0,s\right)\sigma
  \left(s\right)}}\xi _{s,\varepsilon }{\rm d}\bar W_s\left(1+o\left(1\right)\right)\\
&\quad\qquad  -\sqrt{\frac{\psi _\varepsilon }{\varepsilon }}\frac{\left(\vartheta _{\tau ,\varepsilon }^* -\vartheta _0 \right)}{{\rm I}_\tau^t
\left(\vartheta _0\right)}
\int_{\tau }^{t}\frac{\dot
  S\left(\vartheta _0,s\right)^2}{2S\left(\vartheta _0,s\right)\sigma
  \left(s\right)}\xi _{s,\varepsilon }^2 {\rm d}s\left(1+O\left(\vartheta _{\tau ,\varepsilon }^* -\vartheta _0 \right)\right)\\
&\quad = \frac{1}{{\rm I}_\tau^t
\left(\vartheta _0\right)}\int_{\tau }^{t}\frac{\dot
  S\left(\vartheta _0,s\right)}{\sqrt{2S\left(\vartheta _0,s\right)\sigma
  \left(s\right)}}\xi _{s,\varepsilon }{\rm d}\bar
W_s\left(1+o_p\left(1\right)\right)\\
&\quad\qquad  +\sqrt{\frac{\psi _\varepsilon }{\varepsilon }}
\left(\vartheta _{\tau ,\varepsilon }^* -\vartheta _0 \right)^2O_p\left(1\right)+o_p\left(1\right).
\end{align*}
Since $\delta \in \left(\frac{1}{5},\frac{1}{3}\right)$,
\begin{align*}
\sqrt{\frac{\psi _\varepsilon }{\varepsilon }}
\left(\vartheta _{\tau ,\varepsilon }^* -\vartheta _0
\right)^2=\sqrt{\frac{\psi _\varepsilon }{\varepsilon }} \psi _\varepsilon
^2\;O_p\left(1\right)=\varepsilon ^{\frac{5}{2}\left(\delta
  -\frac{1}{5}\right)}\;O_p\left(1\right)\longrightarrow  0
\end{align*}
and hence
\begin{align*}
\int_{\tau }^{t}\frac{\dot
  S\left(\vartheta _0,s\right)}{\sqrt{2S\left(\vartheta _0,s\right)\sigma
  \left(s\right)}}\xi _{s,\varepsilon }{\rm d}\bar
W_s\Longrightarrow {\cal N}\left(0,{\rm I}_\tau^t
\left(\vartheta _0\right)\right)
\end{align*}
and, consequently,
\begin{align*}
   \sqrt{\frac{\psi _\varepsilon }{\varepsilon }}\left(\vartheta _{t,\varepsilon}
^\star-\vartheta _0\right)         \Longrightarrow {\cal N}\left(0,{\rm I}_\tau^t
\left(\vartheta _0\right)^{-1}\right).
\end{align*}
Thus we constructed estimator which is consistent and asymptotically normal
with good rate. It requires solving the Riccati equation just for one value
$\vartheta = \vartheta _{\tau ,\varepsilon }^*$ and the random functions
$m(\vartheta _{\tau ,\varepsilon }^*,t)$ and $\dot m(\vartheta
_{\tau ,\varepsilon }^*,t)$. Though the presentation here was formal, all calculations can be made precise using the
technique developed in
\cite{KZ14}, \cite{KhK18}, \cite{Kut16}, \cite{Kut17}, \cite{KZ20}.

The adaptive filtration can be realized with the help of the equations
\eqref{3-3} and
\begin{align*}
{\rm d}\hat m_t=-a\left(\vartheta _{t,\varepsilon}
^\star,t\right)\hat m_t{\rm d}t+\frac{\psi _\varepsilon }{\varepsilon }\frac{b\left(\vartheta _{t,\varepsilon}
^\star,t\right)}{\sigma \left(t\right)}\left[{\rm d}X_t-f\left(\vartheta _{t,\varepsilon}
^\star,t\right)\hat m_t{\rm d}t\right],\; \tau <t\leq T,
\end{align*}
subject to initial value $\hat m_\tau =m(\vartheta _{\tau ,\varepsilon
}^*,\tau ) $. Moreover, as it was shown in the mentioned above works in
similar situations this
estimation of $m\left(\vartheta_0,t\right)$ can have some properties of
optimality. 

\bigskip

 Consider the observations model
\eqref{01},\eqref{02} and assume that conditions ${\cal A}, {\cal B}$ hold. As
$y_0=0$  the processes $X^T,Y^T$ converge with probability  1 to $0$,
\begin{align*}
\sup_{0\leq t\leq T}\left|X_t\right|\longrightarrow 0,\qquad \quad \sup_{0\leq
  t\leq T}\left|Y_t\right|\longrightarrow 0.
\end{align*}
This means that the limit observations are $X_t\equiv 0$, but nevertheless the
MLE and BE still have all the properties, claimed in  Theorem \ref{T1}. On the other hand,
this shows the essential difference between the observation model  \eqref{01},\eqref{02} with
 $\psi _\varepsilon =\varepsilon $ studied in \cite{Kut94} and the present one. Unlike in out case,
there we have a
deterministic dynamical system \eqref{06} (limit model, $y_0\not=0$) perturbed
by small noise.

The result of this paper is in a sense surprising. We see that the
error of estimation decreases if noise intensity $\psi _\varepsilon $ in the state equation
increases in some region ($\psi _\varepsilon =\varepsilon ^\delta ,0<\delta
<\frac{1}{3}$). This can be explained heuristically as follows. Suppose that the
conditions ${\cal A}, {\cal B}$ hold and
$y_0=0$. Then
\begin{align*}
Y_t=\psi _\varepsilon \int_{0}^{t}e^{-\int_{s}^{t}a\left(\vartheta _0,v\right){\rm
    d}v}b\left(\vartheta _0,s\right){\rm d}V_s =\psi _\varepsilon \,\hat Y_t\left(\vartheta _0\right),
\end{align*}
where $\hat Y_t\left(\vartheta _0\right)$ is defined by the latter equality.
The observed process can be rewritten as
\begin{align}
\label{3-5}
{\rm d}X_t=\psi _\varepsilon f\left(\vartheta_0
,t\right)\,\hat Y_t\left(\vartheta _0\right) \;{\rm d}t+\varepsilon \sigma
\left(t\right){\rm d}W_t.
\end{align}
Here $\varepsilon \rightarrow 0$ much faster than $\psi _\varepsilon
\rightarrow 0$. Suppose that  $\varepsilon =0$ holds already but $\psi
_\varepsilon >0$. Then the observations $x_t=\frac{{\rm d}X_t}{{\rm d}t},
t\in\left[0,T\right]$ are
$x_t=\psi _\varepsilon f\left(\vartheta _0,t\right)\hat Y_t\left(\vartheta
_0\right),$ $t\in\left[0,T\right]$ and the process
$\tilde x_t=\psi _\varepsilon ^{-1}x_t=f\left(\vartheta _0,t\right)\hat Y_t\left(\vartheta _0\right)$
has the differential
\begin{align*}
{\rm d}\tilde x_t= \left[f'\left(\vartheta
  _0,t\right)-f\left(\vartheta _0,t\right)a\left(\vartheta
  _0,t\right)\right]\hat Y_t\left(\vartheta _0\right){\rm d}t +
S\left(\vartheta _0,t\right){\rm d}V_t,\qquad  \tilde x_0=0,
\end{align*}
where as before $S\left(\vartheta _0,t\right)=f\left(\vartheta
_0,t\right)b\left(\vartheta _0,t\right)$. The It\^o formula for $\tilde x_t^2$
gives the equation
\begin{align*}
{\rm d}\tilde x_t^2=2\tilde x_t{\rm d}\tilde x_t+S\left(\vartheta
_0,t\right)^2{\rm d}t,\qquad  \tilde x_0=0.
\end{align*}
Therefore we can write
\begin{align*}
\int_{0}^{T}S\left(\vartheta _0,t\right)^2{\rm d}t=\tilde x_T^2- 2
\int_{0}^{T}\tilde x_t{\rm d}\tilde x_t .
\end{align*}
This equation (under identifiability condition) allows us to find $\vartheta
_0$ precisely. For example, let $f\left(\vartheta
,t\right)=f\left(t\right)$ and $b\left(\vartheta ,t\right)=\sqrt{\vartheta
}b\left(t\right)$, then
\begin{align*}
\vartheta _0=\left(\int_{0}^{T}f\left(t\right)^2b\left(t\right)^2{\rm
  d}t\right)^{-1} \left[\tilde x_T^2- 2
\int_{0}^{T}\tilde x_t{\rm d}\tilde x_t  \right].
\end{align*}
We see that if $\varepsilon =0$, then the value $\vartheta _0$ by observations
$x_t,t\in\left[0,T\right]$ can be calculated without error.

Note that in observations \eqref{3-5} the function $\psi _\varepsilon $ plays
the role of amplitude of the signal $\psi _\varepsilon f\left(\vartheta_0
,t\right)\,\hat Y_t\left(\vartheta _0\right)$ containing unknown parameter.
The increase in $\psi _\varepsilon $ leads  to increase in the
signal-to-noise ratio, $SNR \approx \frac{\psi _\varepsilon ^2}{\varepsilon
  ^2}$.
Of course, all these explications are only heuristics and for the other range of $\psi
_\varepsilon $ they may not apply.  In particular, if $\psi _\varepsilon
=\varepsilon $,  the rate of convergence of the estimators is essentially
better than $\sqrt{\frac{\varepsilon }{\psi _\varepsilon }}$ (see \eqref{07}).

\bigskip

The normalization $\varphi _\varepsilon $ is determined by the key
representation \eqref{2-14}. We obtained $\varphi
_\varepsilon=\sqrt{\frac{\varepsilon }{\psi _\varepsilon }}=\varepsilon
^{\frac{1}{2}\left(1-\delta \right)}=\varepsilon ^\gamma , \frac{1}{3}<\gamma
<\frac{1}{2}$ because we assumed that $\varepsilon /\psi _\varepsilon
^3\rightarrow 0$ or let $\psi _\varepsilon =\varepsilon ^\delta ,0<\delta
<\frac{1}{3}$. If we assume that $\varepsilon /\psi _\varepsilon
^3\rightarrow \infty $ then the main term in \eqref{2-14} will have order
$O\left(\frac{\varepsilon }{\psi _\varepsilon }\right)$ and this would probably
lead to the normalization $\varphi _\varepsilon =\psi _\varepsilon
=\varepsilon ^\delta ,\frac{1}{3}<\delta <1$.

\bigskip

{\sl Remark.} The case $y_0\not=0$ merits a special study because there is an ``atom'' at
the point $t=0$. Note that the ordinary integral
in the expression \eqref{ku} converges to the similar limit
($t=\frac{\varepsilon }{\psi _\varepsilon } \ln\varepsilon ^{-1}\rightarrow 0$) 
\begin{align*}
\frac{\psi _\varepsilon}{\varepsilon }\int_{0}^{t}e^{-\int_{s}^{t}q_\varepsilon \left(\vartheta_0
  ,v\right){\rm d}v} A\left(\vartheta_0 ,s\right)\dot  f\left(\vartheta_0
    ,s\right)m\left(\vartheta_0
  ,s\right){\rm d}s\longrightarrow \frac{\dot f\left(\vartheta
  _0,0\right)}{f\left(\vartheta _0,0\right)}y_0 .
\end{align*}
To verify it we use the expansion $\gamma _*\left(\vartheta
_0,t\right)=\frac{\psi _\varepsilon }{\varepsilon }b\left(\vartheta
_0,0\right)^2t\left(1+o\left(1\right)\right) $  for small $t$ and change the variables $v=z
\frac{\varepsilon }{\psi _\varepsilon }$, $s=y\frac{\varepsilon }{\psi
  _\varepsilon }$. The other two integrals in  \eqref{ku} converge to
zero. Therefore, once more we have
\begin{align*}
\dot f\left(\vartheta _0,t\right)m\left(\vartheta _0,t\right)+f\left(\vartheta
_0,t\right)\dot m\left(\vartheta _0,t\right)\longrightarrow 0.
\end{align*}
The rate of convergence of estimators  has to be at least $\sqrt{\frac{\varepsilon}{ \psi
  _\varepsilon }}$. For example, if we suppose that $f\left(\vartheta
,t\right)=\vartheta f\left(t\right), f\left(0\right)\not=0$ and introduce the estimator
\begin{align*}
\bar\vartheta _{\tau _\varepsilon }=\frac{X_{\tau _\varepsilon }}{y_0 f\left(0\right)\tau _\varepsilon },
\end{align*}
then for  $\tau _\varepsilon =\varepsilon
/\psi _\varepsilon $ using elementary calculations  we obtain 
\begin{align*}
\sqrt{\frac{ \psi
  _\varepsilon}{\varepsilon} }\left(\bar\vartheta _{\tau _\varepsilon }-\vartheta _0\right)
 \Longrightarrow {\cal N}\left(0, d\left(\vartheta _0\right)^2\right)
\end{align*}
with some $d\left(\vartheta _0\right)^2>0$. Of course, this estimator can be
used as preliminary in the construction \eqref{3-3} of one-step MLE-process
$\vartheta _{t,\varepsilon }^\star,\tau _\varepsilon <t\leq T$ for this
model. Note that such ($\tau _\varepsilon \rightarrow 0$) preliminary
estimators in one-step estimation were used many times (see, e.g. \cite{KZ14},
\cite{Kut20a} and references there in), but the rate of convergence of them
were always slower than the rate of convergence of estimators constructed by
observations on a fixed interval.

{\bf Acknowledgment.} I am grateful to P. Chigansky for useful comments and
especially for attracting my attention to the situation with $y_0\not=0$ (see
{\sl Remark} above).
This research was supported by RSF project no 20-61-47043.


\begin{thebibliography}{99}
\bibitem{A83} Arato, M. (1983) \textsl{Linear Stochastic Systems with Constant
  Coefficients. A Statistical Approach.} Lecture Notes in Control and
  Inform. Sci., 45, New York: Springer.

\bibitem{AHG13} Auger, F.,   Hilairet, M.,    Guerrero, J.M.,   Monmasson,
  E.,   Orlowska-Kowalska, T. and       Katsura, S. (2013) Industrial
  Applications of the Kalman Filter: A Review.     \textsl{IEEE Transactions on
  Industrial Electronics.} 60,  12,  5458-5474.

\bibitem {CMT05} { Capp\'e, O., Moulines, E. and Ryd\'en, T.}  (2005) \textsl{ Inference
  in Hidden Markov Models}. New York: Springer.

\bibitem {PCh09} Chigansky, P. (2009) Maximum likelihood estimation for hidden
  Markov models in continuous time. \textsl{ Statist. Inference Stoch. Processes},
  12, 2, 139-163.

\bibitem {DD03} Dochain, D. (2003) State and parameter estimation in chemical and biochemical
processes: a tutorial. \textsl{Journal of Process Control}, 13, 8, 801-818.

\bibitem {DM92} Dubach, A.C.  and   Markl, H. (1992)  Application of an extended Kalman filter
method for monitoring high density cultivation of Escherichia coli.  \textsl{Journal of
Fermentation and Bioengineering}, 73, 5, 396-402.


\bibitem {EAM95} {Elliott, R. J., Aggoun, L. and Moor, J. B.}  (1995) \textsl{Hidden
  Markov Models}.  New York: Springer.

\bibitem {EM02} Ephraim, Y., Mehrav, N. (2002) Hidden Markov
  processes. \textsl{ IEEE Trans. Inform. Theory,}  48, 6, 1518-1569.

\bibitem{ZB11} Falletti, E. and Falco, G.   (2019) Kalman filter-based
  approaches for positioning: integrating global positioning with inertial
  sensors.  In  \textsl{Handbook of
  Position Location: Theory, Practice, and Advances}, Ed's Zekavat, S.A.R. and
  Buehrer, R.M., IEEE Press: Wiley, 763-838.

\bibitem{FP89} Fleming, W.H. and Pardoux, E. (1989) Piecewise monotone
  filtering with small observation noise. \textsl{SIAM J. Control Optim.}, 27, 1158-1181. 

\bibitem{HCCL03} Hu, C., Chen, W., Chen, Y. and Liu, D. (2003) Adaptive Kalman
  filtering for vehicule navigation. \textsl{ J. Global Positioning systems}, 2,
  1, 42-47.


\bibitem{IH81} Ibragimov, I.A. and Has'minskii R. Z. (1981)
  \textsl{Statistical Estimation --- Asymptotic Theory.}   N.Y.: Springer

\bibitem{KS91}  Kallianpur, G., Selukar, R.S. (1991) Parameter estimation in
  linear filtering   \textsl{J. Multivariate Analysis},  39, P. 284--304.

\bibitem{Kal60} Kalman, R.E,  (1960)  A new approach to  linear
  filtering and prediction problems. \textsl{Trans. ASME}, 82, 35-45.

\bibitem{KB61} Kalman, R.E, and Bucy, R.,S. (1961) New results in linear
  filtering and prediction theory. \textsl{Trans. ASME}, 83D, 95-100.

\bibitem{KhK18} Khasminskii, R. Z. and Kutoyants, Yu. A.  (2018) On parameter
  estimation of hidden telegraph process.   \textsl{Bernoulli}, 24, 3,
  2064-2090.

\bibitem{KSGT07} Kondrashov, D., Shprits, Y.  Ghil, M. and Thorne, R.  (2007)
  A Kalman filter technique to estimate relativistic electron lifetimes in the
  outer radiation belt \textsl{Journal of Geophysical Research: Space Physics,
  } 112, A10227.

\bibitem{Kut84}  Kutoyants, Yu. A.    (1984)   {\sl Parameter Estimation for
Stochastic Processes}. Berlin: Heldermann (Translation of Russian ed. 1980).


\bibitem{Kut94} Kutoyants, Y.A. (1994) \textsl{Identification of Dynamical
  Systems with Small Noise.} Dordrecht: Kluwer.

\bibitem{Kut04} Kutoyants, Yu.A. (2004) \textsl{Statistical Inference for
  Ergodic Diffusion Processes.} London: Springer.

\bibitem{Kut16} Kutoyants Yu. A., (2016) On approximation of BSDE and
  Multi-step MLE-processes. \textsl{Probability, Uncertainty, and Quantitative Risk,}
  1, 1, 1-22.

\bibitem{Kut17} Kutoyants Yu. A., (2017) On the multi-step MLE-process for
  ergodic diffusion. \textsl{Stochastic     Process. Appl}. 127, 2243-2261.

\bibitem{Kut19a} Kutoyants Yu. A., (2019) On parameter estimation of the hidden
  Ornstein-Uhlenbeck process. \textsl{ J. Multivariate Analysis} 169, 248-263.

\bibitem{Kut19b} {\it  Kutoyants, Yu. A.} (2019) On parameter estimation of hidden ergodic
  Ornstein-Uhlenbeck process.   \textsl{ Electronic J. of Statistics,}  13,   4508-4526.

\bibitem{Kut20a} Kutoyants Yu. A. (2020)  Parameter estimation for continuous
  time hidden Markov processes. \textsl{  Automation and Remote Control}, 81, 3,  446-469.

\bibitem{Kut20b} Kutoyants Yu. A. (2020)   Localization of source by hidden
  Gaussian  processes. To appear in the \textsl{Annals of the
    Institute of Statistical Mathematics}, doi.org/10.1007/s10463-020-00763-2.

\bibitem{KZ14} Kutoyants Yu. A., Zhou, L. (2014) On approximation of the
  backward stochastic differential equation.  \textsl{Journal of Statistical Planning
  and Inference}, 150, 111-123.

\bibitem{KZ20} Kutoyants Yu. A., Zhou, L. (2020) On parameter estimation of the hidden
Gaussian process in perturbed SDE. Submitted

\bibitem{LS01} Liptser, R.S., and  Shiryayev, A.N.  (2001) \textsl{Statistics of
  Random Processes, I. General Theory.} 2nd Ed., N.Y.: Springer.

\bibitem{Pic91}  Picard, J. (1991) Efficiency of the extended Kalman filter
  for nonlinear systems with small noise.  \textsl{SIAM J. Appl. Math.} 51, 843-885. 


\bibitem{Ru91} Rutan, S.C. (1991) Adaptive Kalman  filtering.
 \textsl{Anal. Chem.}, 63 (22), 1103A-1109A.

\bibitem{W10} Wells, C. (2010)    \textsl{The Kalman Filter in Finance}. New York, Springer,



\end{thebibliography}
\end{document}